\begin{document}

\newcommand{\ci}[1]{_{ {}_{\scriptstyle #1}}}

\newcommand{\norm}[1]{\ensuremath{\left\|#1\right\|}}
\newcommand{\abs}[1]{\ensuremath{\left\vert#1\right\vert}}
\newcommand{\ip}[2]{\ensuremath{\left\langle#1,#2\right\rangle}}
\newcommand{\p}{\ensuremath{\partial}}
\newcommand{\pr}{\mathcal{P}}

\newcommand{\pbar}{\ensuremath{\bar{\partial}}}
\newcommand{\db}{\overline\partial}
\newcommand{\D}{\mathbb{D}}
\newcommand{\B}{\mathbb{B}}
\newcommand{\Sp}{\mathbb{S}}
\newcommand{\T}{\mathbb{T}}
\newcommand{\R}{\mathbb{R}}
\newcommand{\Z}{\mathbb{Z}}
\newcommand{\C}{\mathbb{C}}
\newcommand{\N}{\mathbb{N}}
\newcommand{\scrH}{\mathcal{H}}
\newcommand{\scrL}{\mathcal{L}}
\newcommand{\td}{\widetilde\Delta}

\newcommand{\BB}{\mathcal{B}}
\newcommand{\HH}{\mathcal{H}}
\newcommand{\KK}{\mathcal{K}}
\newcommand{\LL}{\mathcal{L}}
\newcommand{\MM}{\mathcal{M}}
\newcommand{\FF}{\mathcal{F}}

\newcommand{\Om}{\Omega}
\newcommand{\La}{\Lambda}

\newcommand{\rk}{\operatorname{rk}}
\newcommand{\card}{\operatorname{card}}
\newcommand{\ran}{\operatorname{Ran}}
\newcommand{\osc}{\operatorname{OSC}}
\newcommand{\im}{\operatorname{Im}}
\newcommand{\re}{\operatorname{Re}}
\newcommand{\tr}{\operatorname{tr}}
\newcommand{\vf}{\varphi}
\newcommand{\f}[2]{\ensuremath{\frac{#1}{#2}}}

\newcommand{\kzp}{k_z^{(p,\alpha)}}
\newcommand{\klp}{k_{\lambda_i}^{(p,\alpha)}}
\newcommand{\TTp}{\mathcal{T}_p}

\newcommand{\vp}{\varphi}
\newcommand{\al}{\alpha}
\newcommand{\be}{\beta}
\newcommand{\la}{\lambda}
\newcommand{\li}{\lambda_i}
\newcommand{\lb}{\lambda_{\beta}}
\newcommand{\Bo}{\mathcal{B}(\Omega)}
\newcommand{\Bbp}{\mathcal{B}_{\beta}^{p}}
\newcommand{\Bbt}{\mathcal{B}(\Omega)}
\newcommand{\Lbt}{L_{\beta}^{2}}
\newcommand{\Kz}{K_z}
\newcommand{\kz}{k_z}
\newcommand{\Kl}{K_{\lambda_i}}
\newcommand{\kl}{k_{\lambda_i}}
\newcommand{\Kw}{K_w}
\newcommand{\kw}{k_w}
\newcommand{\Kbz}{K_z}
\newcommand{\Kbl}{K_{\lambda_i}}
\newcommand{\kbz}{k_z}
\newcommand{\kbl}{k_{\lambda_i}}
\newcommand{\Kbw}{K_w}
\newcommand{\kbw}{k_w}
\newcommand{\BL}{\mathcal{L}\left(\mathcal{B}(\Omega), L^2(\Om;d\sigma)\right)}

\newcommand{\af}{\mathfrak{a}}
\newcommand{\bb}{\mathfrak{b}}
\newcommand{\cc}{\mathfrak{c}}

\newcommand{\entrylabel}[1]{\mbox{#1}\hfill}

\newenvironment{entry}
{\begin{list}{X}%
  {\renewcommand{\makelabel}{\entrylabel}%
      \setlength{\labelwidth}{55pt}%
      \setlength{\leftmargin}{\labelwidth}
      \addtolength{\leftmargin}{\labelsep}%
   }%
}%
{\end{list}}


\numberwithin{equation}{section}

\newtheorem{thm}{Theorem}[section]
\newtheorem{lm}[thm]{Lemma}
\newtheorem{cor}[thm]{Corollary}
\newtheorem{conj}[thm]{Conjecture}
\newtheorem{prob}[thm]{Problem}
\newtheorem{prop}[thm]{Proposition}
\newtheorem*{prop*}{Proposition}

\theoremstyle{remark}
\newtheorem{rem}[thm]{Remark}
\newtheorem*{rem*}{Remark}
\newtheorem{example}[thm]{Example}

\title{A Reproducing Kernel Thesis for Operators on Bergman-type Function Spaces}

\author[M. Mitkovski]{Mishko Mitkovski$^\dagger$}
\address{Mishko Mitkovski, Department of Mathematical Sciences\\ Clemson University\\ O-110 Martin Hall, Box 340975\\ Clemson, SC USA 29634}
\email{mmitkov@clemson.edu}
\urladdr{http://people.clemson.edu/~mmitkov/}
\thanks{$\dagger$ Research supported in part by National Science Foundation DMS grant \# 1101251.}

\author[B. D. Wick]{Brett D. Wick$^\ddagger$}
\address{Brett D. Wick, School of Mathematics\\ Georgia Institute of Technology\\ 686 Cherry Street\\ Atlanta, GA USA 30332-0160}
\email{wick@math.gatech.edu}
\urladdr{www.math.gatech.edu/~wick}
\thanks{$\ddagger$ Research supported in part by National Science Foundation DMS grant \# 0955432.}

\subjclass[2000]{32A36, 32A, 47B05, 47B35}
\keywords{Berezin Transform, Compact Operators, Bergman Space, Essential Norm, Toeplitz Algebra, Toeplitz Operator}

\begin{abstract}
In this paper we consider the reproducing kernel thesis for boundedness and compactness for various operators on Bergman-type spaces. In particular, the results in this paper apply to the weighted Bergman space on the unit ball, and more generally to weighted Fock spaces.  
\end{abstract}

\maketitle

\section{Introduction}

The goal of this paper is to show that in a wide variety of classical function spaces various properties of a given operator can be determined by examining only its behavior on the normalized reproducing kernels of that space. In other words, the results in this paper may be viewed as ``Reproducing Kernel Thesis'' (RKT) statements. 
We develop a unified approach to tackle problems of this kind which works in a variety of classical function spaces.  We introduce a special class of Reproducing Kernel Hilbert Spaces (RKHS) as an abstract framework for our results. We call them Bergman-type spaces due to their similarities to the classical Bergman space. Two prime examples are the Bergman space and the Bargmann-Fock space. However, our definition is much more general and includes the weighted versions of these spaces, Bergman spaces on more complicated domains etc.

We consider two types of RKT statements; RKT for boundedness and RKT for compactness. We show that in our setting RKT for boundedness is almost true in the sense that a condition slightly stronger than the optimal one is enough to conclude boundedness. The first result of this kind seems to be due to Bonsall~\cite{Bo} who showed that Hankel operators (with BMO symbols) on the classical Hardy space satisfy RKT for boundedness. This result, now known as the ``Reproducing Kernel Thesis'', was later extended to other function spaces~\cites{Ax, Bar, Sm, Zor}, with a similar or a slightly stronger assumption on the operator. In many cases, the strongest form of the RKT for boundedness proved to be wrong~\cite{Naz} for general operators, and some sort of strengthening of the original assumption is usually necessary. Here we provide a reasonable condition for general operators on Bergman-type spaces under which RKT for boundedness holds. Furthermore, we show that under the same condition operators on Bergman-type spaces satisfy the RKT for compactness. Namely, every operator which sends a weakly convergent sequence of normalized reproducing kernels into a strongly convergent one must be compact. Results of this kind are numerous in the literature. Here we mention only a few and refer the reader to \cites{AZ2, YS, E92, E, I, LH, R, St, MZ, CLNZ} and the references in those papers for more examples of these results. The RKT for compactness was first proved to hold for  every Toeplitz operator on the classical Bergman space of the disc by Zheng \cite{ZhD}. It was later proved by Stroethoff and Zheng that the same is true for Hankel operators on the Bergman space, see \cite{SZ}. These two results were generalized by Axler and Zheng in \cite{AZ}. They proved that every finite sum of finite products of Toeplitz operators also satisfy the RKTC. In \cite{Sua} Su\'arez, in certain sense, closed the problem in the classical Bergman setting by showing that the Toeplitz algebra is exactly the class for which the RKT for compactness holds. Su\'arez's results were later extended to various different function spaces and settings. Namely, the same results were shown to be true for the Bargmann-Fock space \cite{BI}, Bergman spaces on the disc and unit ball with classical weights \cite{MSW}, and the weighted Bergman spaces on the polydisc \cite{MW}. Here, by introducing the notion of the Bergman-type space we are able to give a unified approach to many of these results, very often with shorter and more informative proofs. However, our results also hold for Bergman spaces on certain bounded symmetric domains for which the RKT was previously not known. Finally, we want to mention that a different general approach which was applied to similar questions about Hankel operators was offered in~\cite{JPR}.

The paper is organized as follows. In the next section we give a precise definition of Bergman-type spaces and prove some of their basic properties. In the following section we treat the RKT for boundedness. In particular, we show how our general result can be used to deduce a variety of classical results for boundedness of Toeplitz and Hankel operators. In the last section we treat the RKT for compactness. First, we prove the crucial localization property building on some of the ideas initiated by Axler and Zheng and improved later by Su\'arez. We then use this localization property to estimate the essential norm of a large class of operators and, in particular, to prove the RKT for compactness in our general setting. Finally, we show that for a certain subclass of operators the main assumption in the RKT for compactness can be further relaxed. It is quite possible that this subclass generates the whole Toeplitz algebra, but we don't address this question here.

\section{Bergman-type spaces}

We introduce a large class of reproducing kernel Hilbert spaces that will form an abstract framework for our results. Due to their similarities with the classical Bergman space we call them Bergman-type spaces.  In defining the key properties of these spaces, we use the standard notation that $A\lesssim B$ to denote that there exists a constant $C$ such that $A\leq C B$.  And, $A\simeq B$ which means that $A\lesssim B$ and $B\lesssim A$.

Below we list the defining properties of these spaces.

\begin{itemize}

\item[\label{A1} A.1] Let $\Om$ be a domain (connected open set) in $\C^n$ which contains the origin. We assume that for each $z\in\Om$, there exists an involution $\varphi_z \in \textnormal{Aut}(\Om)$ satisfying $\varphi_z(0)=z$. 

\item[\label{A2} A.2] We assume the existence of a metric $d$ on $\Omega$ which is quasi-invariant under $\vp_z$, i.e., $d(u,v)\simeq d(\vp_z(u),\vp_z(v))$ with the implied constants independent of $u,v\in\Om$. In addition, we assume that the metric space $(\Om, d)$ is separable and finitely compact, i.e., every closed ball in $(\Om, d)$ is compact.   As usual, we denote by $D(z, r)$ the disc centered at $z$ with radius $r$ with respect to the metric $d$. 

\item[\label{A3} A.3] We assume the existence of a finite Borel measure $\sigma$ on $\Om$ and define $\Bo$ to be the space of holomorphic functions on $\Om$ equipped with the $L^2(\Om;d\sigma)$ norm. We assume that $\Bo$ is a RKHS and denote by $\Kz$ and $\kz$ the reproducing and the normalized reproducing kernel in $\Bo$. Everywhere in the paper $\norm{\,\cdot\,}$ and $\ip{\,\cdot}{\cdot\,}$ will always denote the norm and the inner product in $L^2(\Om;d\sigma)$. We will assume that $\norm{K_z}$  is continuous as a function of $z$ taking $(\Om, d)$ into $\R$.
\end{itemize}

We will say that $\Bo$ is a \textit{Bergman-type space} if in addition to \hyperref[A1]{A.1}-\hyperref[A3]{A.3} it also satisfies the following properties.

\begin{itemize}
\item[\label{A4} A.4] We assume that the measure $d\la(z):=\norm{K_z}^2d\sigma(z)$ is quasi-invariant under all $\vp_z$, i.e., for every Borel set $E\subset\Om$  we have $\la(E)\simeq \la(\vp_z(E))$ with the implied constants independent of $z\in\Om$. 

\item[\label{A5} A.5] We assume that $$\abs{\ip{k_z}{k_w}}\simeq \frac{1}{\norm{K_{\vp_z(w)}}},$$ with the implied constants independent of $z, w\in\Om$. 

\item[\label{A6} A.6] We assume that there exists a positive constant  $\kappa<2$ such that \begin{equation} \label{propA6}
\int_{\Om} {\frac{\abs{\ip{K_z}{K_w}}^{\frac{r+s}{2}}}{\norm{K_z}^s\norm{K_w}^r}\,d\la(w)}\leq C = C(r,s) < \infty, \; \; \forall z \in \Om
\end{equation}
for all $r>\kappa>s>0$ or that~\eqref{propA6} holds for all $r=s>0$.   In the latter case we will say that $\kappa =0$. These will be called the Rudin-Forelli estimates for $\Bo$.

\item[\label{A7} A.7] We assume that $\lim_{d(z,0)\to\infty}\norm{K_z}=\infty$.

\item[\label{A8} A.8] We assume that the metric space $(\Om, d)$ has finite asymptotic dimension in the sense of Gromov~\cite{Gro}. More precisely, we assume that there exists an integer $N>0$ such that for any $r>0$ there is a covering $\FF_r=\{F_j\}$ of $\Om$ by disjoint Borel sets satisfying
\begin{enumerate}
\item[\label{Finite} \textnormal{(1)}] every point of $\Om$ belongs to at most $N$ of the sets $G_j:=\{z\in\Om: d(z, F_j)\leq r\}$,
\item[\label{Diameter} \textnormal{(2)}] $\sup_j \textnormal{diam}_d\, F_j <\infty$.
\end{enumerate}

\end{itemize}

We say that $\Bo$ is a \textit{strong Bergman-type space} if we have $=$ instead of $\simeq$ everywhere in \hyperref[A1]{A.1}-\hyperref[A5]{A.5}. 

\subsection{Some Examples}
Below we list some examples of Bergman-type spaces which satisfy conditions \hyperref[A1]{A.1}-\hyperref[A7]{A.7}. All of them actually represent strong Bergman-type spaces.

\begin{example} The classical Bergman space on the disc. In this case $\Om=\D$, $\vp_z(w)=\frac{z-w}{1-\bar{z}w}$, and $d\sigma$ is the normalized Lebesgue measure on $\D$ . The metric $d$ is the usual hyperbolic metric on the disc. All the conditions \hyperref[A1]{A.1}-\hyperref[A8]{A.8} are well-known to true. Notice that $\kappa=1$. Condition \hyperref[A5]{A.5} in this case is the well-known ``magic''  identity:
$$
1-\abs{\varphi_z(w)}^2=\frac{(1-\abs{z}^2)(1-\abs{w}^2)}{\abs{1-\overline{z}w}^2}.
$$
\end{example}

\begin{example} The weighted Bergman space $A_{\al}(\B_n)$, $\al>-1$ over the ball $\B_n$ with the classical weight $(1-\abs{z}^2)^{\al}$. In this case $\Om=\B_n$, $\vp_z(w)$ is the automorphism of the ball such that $\vp_z(0)=z$, and $d\sigma$ is the normalized Lebesgue measure on $\B_n$ weighted by $(1-\abs{z}^2)^{\al}$. The metric $d$ is the usual hyperbolic metric on the ball. Again, all the conditions \hyperref[A1]{A.1}-\hyperref[A8]{A.8} are well-known to be true. In this case it is not hard to check that $\kappa=\frac{2n}{n+1+\al}$.
\end{example}


\begin{example}\label{example} The Bergman space over any bounded symmetric domain which satisfies \hyperref[A6]{A.6} and \hyperref[A8]{A.8}. It is clear that \hyperref[A1]{A.1} is satisfied with the obvious choice of $\vp_z$. The metric $d$ on $\Om$ is taken to be the classical Bergman metric. With this choice of $d$,  \hyperref[A2]{A.2} is also satisfied. The measure $\sigma$ is just  taken to be the usual volume (Lebesgue) measure. In this case $\lambda$ is just the invariant measure on $\Om$. Therefore both  \hyperref[A3]{A.3} and  \hyperref[A4]{A.4} hold. It is well known also that  \hyperref[A5]{A.5} and  \hyperref[A7]{A.7} are true (see e. g. ~\cite{BCZ}). The Rudin-Forelli estimates \hyperref[A6]{A.6} are known to hold for a wide variety of bounded symmetric domains as shown in ~\cites{FK, CR}. This example of course contains all the previous examples as a special case.

\end{example}

\begin{example} The classical Fock space. In this case $\Omega=\C^n$,  $\vp_z(w)=z-w$ $d\sigma(z)=\frac{\alpha}{\pi}e^{-\alpha\abs{z}^2}dv(z)$, where $dv(z)$ is the usual Lebesgue measure on $\C^n$. The metric $d$ is just the Euclidian metric. All the properties \hyperref[A1]{A.1}-\hyperref[A8]{A.8} are also well-known to be true. Notice also that for this space  $\kappa=0$.
\end{example}

\subsection{Projection Operators on Bergman-type Spaces} 
It is easy to see that the orthogonal projection of $L^2(\Om;d\sigma)$ onto $\Bo$ is given by the integral operator
$$
P(f)(z):=\int_{\Om}\ip{\Kbw}{\Kbz}f(w)d\sigma(w).
$$
Therefore, for all $f\in\Bo$ we have $f(z)=\int_{\Om}\ip{f}{\kbw}\kbw(z)\,d\lambda(w).$
Moreover, $$\norm{f}^2=\int_{\Om}\abs{\ip{f}{\kw}}^2\,d\la(w).$$

We next show that if $\kappa>0$, $P$ is bounded as an operator on $L^p(\Om;d\sigma)$ for $1<p<\infty$ and if $\kappa=0$, $P$ is bounded as an operator on $L^p(\Om;\frac{d\sigma(w)}{\norm{K_w}^p})$. To do this we will need the following result which will be useful later as well. 

\begin{lm}\label{RF} For all $r, s\in\R$ the following quasi-identity holds 
\begin{equation}
\int_{\Om} {\frac{\abs{\ip{K_z}{K_w}}^{\frac{r-s}{2}}}{\norm{K_w}^r}\,d\la(w)}\simeq\int_{\Om} {\frac{\abs{\ip{K_z}{K_w}}^{\frac{r+s}{2}}}{\norm{K_z}^s\norm{K_w}^r}\,d\la(w)}
\end{equation}
where the implied constants are independent of $z\in\Om$ and may depend on $r,s$.
\end{lm}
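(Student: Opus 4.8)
The plan is to obtain the quasi-identity by performing the change of variables $w\mapsto\vp_z(w)$ in the left-hand integral, and then reading off the right-hand integral after two applications of property \hyperref[A5]{A.5}. Note first that $\ip{K_z}{K_w}=K_z(w)$ is holomorphic in $w$ and $\norm{K_w}$ is continuous by \hyperref[A3]{A.3}, so all the integrands are Borel measurable; moreover, since $K_{\vp_z(w)}\in\Bo$ has finite norm, \hyperref[A5]{A.5} forces $\ip{K_z}{K_w}\neq 0$ for all $z,w$, so every pointwise quantity appearing below is strictly positive and finite. Consequently the chain of quasi-identities below is legitimate even if the integrals diverge, in which case both sides are simultaneously infinite.

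First I would record that condition \hyperref[A4]{A.4}, i.e. $\la(E)\simeq\la(\vp_z(E))$ for all Borel $E\subset\Om$ with constants independent of $z$, says exactly that the image measure $(\vp_z)_*\la$ is comparable to $\la$; approximating a nonnegative measurable function by simple functions, this upgrades to $\int_\Om g\circ\vp_z\,d\la\simeq\int_\Om g\,d\la$, uniformly in $z$. Applying this with $g$ the integrand of the left-hand side and using that $\vp_z$ is an involution (\hyperref[A1]{A.1}) gives
\[
\int_{\Om}\frac{\abs{\ip{K_z}{K_w}}^{\frac{r-s}{2}}}{\norm{K_w}^r}\,d\la(w)\;\simeq\;\int_{\Om}\frac{\abs{\ip{K_z}{K_{\vp_z(w)}}}^{\frac{r-s}{2}}}{\norm{K_{\vp_z(w)}}^r}\,d\la(w).
\]
Next I would eliminate the two ``twisted'' quantities in this last integral by means of \hyperref[A5]{A.5}. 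Writing $k_u=K_u/\norm{K_u}$, property \hyperref[A5]{A.5} yields $\abs{\ip{k_z}{k_{\vp_z(w)}}}\simeq\norm{K_{\vp_z(\vp_z(w))}}^{-1}=\norm{K_w}^{-1}$ and $\abs{\ip{k_z}{k_w}}\simeq\norm{K_{\vp_z(w)}}^{-1}$, hence
\[
\abs{\ip{K_z}{K_{\vp_z(w)}}}\;\simeq\;\frac{\norm{K_z}\,\norm{K_{\vp_z(w)}}}{\norm{K_w}},
\qquad
\norm{K_{\vp_z(w)}}\;\simeq\;\frac{\norm{K_z}\,\norm{K_w}}{\abs{\ip{K_z}{K_w}}}.
\]
Substituting the first relation, then the second, into the integrand above and collecting powers --- a routine computation whose only arithmetic inputs are $\frac{r-s}{2}-r=-\frac{r+s}{2}$, $\frac{r-s}{2}-\frac{r+s}{2}=-s$ and $-\frac{r-s}{2}-\frac{r+s}{2}=-r$ --- turns the integral into $\norm{K_z}^{-s}\int_{\Om}\abs{\ip{K_z}{K_w}}^{\frac{r+s}{2}}\norm{K_w}^{-r}\,d\la(w)$, which is the right-hand side of the lemma.

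I do not expect any genuine obstacle; the argument is essentially bookkeeping. The two points deserving a little care are the upgrade of the set-theoretic quasi-invariance in \hyperref[A4]{A.4} to comparability of integrals of arbitrary nonnegative functions (via image measures and approximation by simple functions), and the remark that a quasi-identity may be raised to an arbitrary real exponent --- needed here because $r$ and $s$ are not assumed positive --- at the price of letting the implied constants depend on $r$ and $s$.
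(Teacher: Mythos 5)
Your proposal is correct and follows essentially the same route as the paper: a change of variables $w\mapsto\vp_z(w)$ justified by the quasi-invariance of $\la$ in \hyperref[A4]{A.4}, followed by two applications of \hyperref[A5]{A.5} and bookkeeping of exponents (the paper runs the identical computation starting from the right-hand side rather than the left). Your extra remarks --- upgrading the set-level quasi-invariance to integrals of nonnegative functions, the nonvanishing of $\ip{K_z}{K_w}$, and the stability of $\simeq$ under arbitrary real powers --- are exactly the points the paper leaves implicit.
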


\begin{proof} The proof is a change of variables and then using \hyperref[A5]{A.5} two times.  Indeed,
\begin{eqnarray*} \int_{\Om}\frac{ \abs{ \ip{K_z}{K_w}}^{\frac{r+s}{2}}}{\norm{K_z}^s\norm{K_w}^r}d\la(w) &\simeq& 
  \int_{\Om}\frac{ \abs{ \ip{K_z}{K_{\vp_z(u)}}}^{\frac{r+s}{2}}}{\norm{K_z}^s\norm{K_{\vp_z(u)}}^r}\,d\la(\vp_z(u))\\ &\simeq& \int_{\Om}\frac{ \norm{K_z}^{\frac{r+s}{2}}}{\norm{K_z}^s\norm{K_{\vp_z(u)}}^r\abs{k_z(u)}^{\frac{r+s}{2}}}\,d\la(u)\\
 &\simeq&  \int_{\Om}\frac{ \norm{K_z}^{\frac{r-s}{2}}\abs{\ip{k_z}{k_u}}^r}{\norm{K_{u}}^{\frac{r+s}{2}}\abs{\ip{k_z}{k_u}}^{\frac{r+s}{2}}}\,d\la(u)\\
 &=& \int_{\Om} {\frac{\abs{\ip{K_z}{K_u}}^{\frac{r-s}{2}}}{\norm{K_u}^r}\,d\la(u)}.
\end{eqnarray*}
\end{proof}

We will also use the following classical result.  See for example \cite{Zhu} for a proof.

\begin{lm}[Schur's Test]
\label{Schur}
Let $(X,\mu)$ and $(X,\nu)$ be measure spaces, $R(x,y)$ a non-negative measurable function on $X\times X$, $1<p<\infty$ and $\frac{1}{p}+\frac{1}{q}=1$.  If $h$ is a positive function on $X$ that is measurable with respect to $\mu$ and $\nu$ and $C_p$ and $C_q$ are positive constants such that
\begin{itemize}
\item[] $$\int_{X} R(x,y) h(y)^q\,d\nu(y)\leq C_q h(x)^q\textnormal{ for } \mu\textnormal{-almost every } x;$$
\item[] $$\int_{X} R(x,y) h(x)^p\,d\mu(x)\leq C_p h(y)^p\textnormal{ for } \nu\textnormal{-almost every } y,$$
\end{itemize}
then $Tf(x)=\int_{X} R(x,y) f(y)\,d\nu(y)$ defines a bounded operator $T:L^p(X;\nu)\to L^p(X;\mu)$ with $\norm{T}_{L^p(\nu)\to L^p(\mu)}\leq C_q^{\frac{1}{q}}C_p^{\frac{1}{p}}$.
\end{lm}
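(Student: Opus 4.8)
The plan is to prove this by a direct application of H\"older's inequality with a carefully chosen splitting of the kernel $R$, followed by Tonelli's theorem to interchange the order of integration. First I would reduce to the case $f\geq 0$: since $\abs{Tf(x)}\leq T(\abs{f})(x)$ pointwise, it suffices to bound $\norm{T(\abs f)}_{L^p(\mu)}$, and working with nonnegative integrands lets me apply H\"older and Tonelli freely without any a priori integrability hypotheses on the intermediate expressions.

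For fixed $x$, and for $y$ in the set where $R(x,y)>0$, I would write
\[
R(x,y)f(y)=\bigl(R(x,y)^{1/q}h(y)\bigr)\cdot\bigl(R(x,y)^{1/p}h(y)^{-1}f(y)\bigr)
\]
and apply H\"older's inequality in the variable $y$ with exponents $q$ and $p$, obtaining
\[
Tf(x)\leq\left(\int_X R(x,y)h(y)^q\,d\nu(y)\right)^{1/q}\left(\int_X R(x,y)h(y)^{-p}f(y)^p\,d\nu(y)\right)^{1/p}.
\]
The first hypothesis bounds the first factor by $C_q^{1/q}h(x)$ for $\mu$-almost every $x$, so
\[
Tf(x)^p\leq C_q^{p/q}\,h(x)^p\int_X R(x,y)h(y)^{-p}f(y)^p\,d\nu(y).
\]
Next I would integrate this inequality against $d\mu(x)$ and use Tonelli's theorem to swap the order of integration (legitimate since the double integrand is nonnegative and measurable), which gives
\[
\int_X Tf(x)^p\,d\mu(x)\leq C_q^{p/q}\int_X h(y)^{-p}f(y)^p\left(\int_X R(x,y)h(x)^p\,d\mu(x)\right)d\nu(y).
\]
Now the second hypothesis bounds the inner integral by $C_p h(y)^p$ for $\nu$-almost every $y$; the factors $h(y)^{-p}$ and $h(y)^p$ cancel, leaving $\int_X Tf(x)^p\,d\mu(x)\leq C_q^{p/q}C_p\,\norm{f}_{L^p(\nu)}^p$. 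In particular $Tf\in L^p(\mu)$, so the defining integral converges for $\mu$-almost every $x$, and taking $p$-th roots yields $\norm{T}_{L^p(\nu)\to L^p(\mu)}\leq C_q^{1/q}C_p^{1/p}$; the general complex-valued case follows from the reduction above.

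There is no genuine obstacle here — the result is classical — and the only points deserving care are the reduction to $f\geq 0$ and invoking the two (only almost-everywhere valid) hypotheses at the correct stage: the first pointwise in $x$ just after H\"older, the second pointwise in $y$ just after Tonelli. The essential trick is the choice of splitting exponents, $1/q$ on one copy of $R$ and $1/p$ on the other, with the weight $h$ arranged so that the ``bad'' factor $h(y)^{-p}$ created in the first step is exactly annihilated by the ``good'' factor $h(y)^p$ supplied by the second hypothesis.
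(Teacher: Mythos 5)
Your proof is correct; it is the standard argument for Schur's test (split $R=R^{1/q}\cdot R^{1/p}$, apply H\"older with the weight $h$, then Tonelli), which is exactly the proof in the reference the paper cites. The paper itself does not reproduce a proof of this lemma, deferring to \cite{Zhu}, so there is nothing to compare beyond noting that your argument is the classical one and is complete.
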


\begin{lm}\label{proj} Let $P(f)(z):=\int_{\Om}\ip{\Kbw}{\Kbz}f(w)d\sigma(w)$ be the projection operator.
\begin{enumerate}[\textnormal{(}a\textnormal{)}]
\item If $\kappa=0$ then $P$ is bounded as an operator from $L^p(\Om;\frac{d\lambda(w)}{\norm{K_w}^p})$ into $L^p(\Om;\frac{d\lambda(w)}{\norm{K_w}^p})$ for all $1\leq p\leq \infty$. 
\item If $\kappa>0$ then $P$ is bounded as an operator from $L^p(\Om;d\sigma)$ into $L^p(\Om;d\sigma)$ for all $1<p<\infty$
\end{enumerate}
\end{lm}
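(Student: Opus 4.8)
The natural tool here is Schur's test (Lemma~\ref{Schur}) applied to the positive kernel $R(z,w) = \abs{\ip{K_w}{K_z}}$, since $\abs{P(f)(z)} \le \int_\Om R(z,w)\abs{f(w)}\,d\sigma(w)$ and $d\sigma(w) = \norm{K_w}^{-2}\,d\lambda(w)$. The whole problem then reduces to finding, in each of the two cases, a suitable positive test function $h$ and verifying the two symmetric integral inequalities. I would handle the two cases separately, and in both cases the test function should be a power of $\norm{K_w}$ so that the resulting integrals are exactly the Rudin--Forelli integrals controlled by Lemma~\ref{RF} and property~\hyperref[A6]{A.6}.

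\emph{Case $\kappa = 0$.} Here I would first observe that since we are on $L^p$ of the measure $d\sigma(w)/\norm{K_w}^p \cdot \norm{K_w}^p\cdots$ — more precisely, writing the target/source measure as $d\mu(w) = d\nu(w) = \norm{K_w}^{-p}\,d\sigma(w) = \norm{K_w}^{-p-2}\,d\lambda(w)$, and writing the operator as $Tf(z) = \int_\Om \widetilde R(z,w) f(w)\,d\nu(w)$ with $\widetilde R(z,w) = \abs{\ip{K_w}{K_z}}\cdot\norm{K_w}^{p+2}/(\text{density adjustment})$ — the cleanest route is to absorb the weight and take $h \equiv 1$. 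Indeed, in the $\kappa=0$ regime property~\hyperref[A6]{A.6} holds with $r=s>0$ arbitrary; choosing $r=s$ appropriately (and using Lemma~\ref{RF} to pass between the two normalizations) makes both Schur inequalities reduce to a single finite Rudin--Forelli bound, uniform in $z$. For the endpoint cases $p=1$ and $p=\infty$ I would argue directly: boundedness on $L^\infty(\norm{K_w}^{-1}d\lambda)$ is immediate from $\abs{P(f)(z)} \le \norm{f}_\infty \int_\Om \abs{\ip{K_w}{K_z}}\norm{K_w}^{-1}\norm{K_w}^{-2}\,d\lambda(w)$ together with \hyperref[A6]{A.6} and Lemma~\ref{RF}, and $p=1$ follows by a Fubini/duality argument using the symmetry of the kernel $\abs{\ip{K_w}{K_z}}$.

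\emph{Case $\kappa > 0$.} Now $\sigma=\mu=\nu$, the kernel is $R(z,w)=\abs{\ip{K_w}{K_z}}$, and I would take the test function $h(w) = \norm{K_w}^{-t}$ for a parameter $t>0$ to be chosen. Plugging in, the first Schur inequality becomes $\int_\Om \abs{\ip{K_w}{K_z}}\,\norm{K_w}^{-tq}\,d\sigma(w) = \int_\Om \abs{\ip{K_w}{K_z}}\,\norm{K_w}^{-tq-2}\,d\lambda(w) \lesssim \norm{K_z}^{-tq}$, and similarly for the second with $p$ in place of $q$. Matching this against \hyperref[A6]{A.6} (after using Lemma~\ref{RF} to bring it to the form with the extra $\norm{K_z}^s$ factor) forces a relation among $t$, $p$, $q$, $\kappa$, and the requirement $r>\kappa>s>0$ in \hyperref[A6]{A.6} translates into an open condition on $t$; the point is that because $1<p<\infty$ there is a nonempty range of admissible $t$ — roughly $t$ must be chosen so that both $tq$ and $tp$ (suitably shifted) straddle the critical exponent $\kappa$ — and any such $t$ works. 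This is the step that requires care: one must check that the two inequalities can be satisfied simultaneously by a single $t$, which is exactly where $1<p<\infty$ (as opposed to the endpoints) is used, and where the two-sided nature of the Rudin--Forelli estimate \hyperref[A6]{A.6} (valid for all $r>\kappa>s>0$, not just one pair) is essential.

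\emph{Main obstacle.} The routine verifications (change of variables, applying \hyperref[A5]{A.5}) are handled by Lemma~\ref{RF}; the only genuine issue is the bookkeeping in Case $\kappa>0$: identifying the precise range of the Schur exponent $t$ for which both integral estimates follow from \hyperref[A6]{A.6}, and confirming this range is nonempty precisely when $1<p<\infty$. In Case $\kappa=0$ the analogous bookkeeping is trivial because \hyperref[A6]{A.6} is available for all equal exponents $r=s>0$, so the endpoints $p=1,\infty$ come for free.
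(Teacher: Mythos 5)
Your overall strategy (Schur's test against the kernel $\abs{\ip{K_w}{K_z}}$, verified via Lemma~\ref{RF} and \hyperref[A6]{A.6}) is the right one and is exactly what the paper does for part (b), but your choice of test function there is wrong in a direction that no bookkeeping can repair. With $h(w)=\norm{K_w}^{-t}$, $t>0$, the first Schur inequality you write down, $\int_\Om\abs{\ip{K_w}{K_z}}\norm{K_w}^{-tq}\,d\sigma(w)\lesssim\norm{K_z}^{-tq}$, is false: by \hyperref[A7]{A.7} the right-hand side tends to $0$ as $d(z,0)\to\infty$, whereas Lemma~\ref{RF} together with \hyperref[A6]{A.6} (applied with $r=tq+2$, $s=tq$) yields only the non-decaying bound $\int\abs{\ip{K_w}{K_z}}\norm{K_w}^{-tq}\,d\sigma(w)\leq C$, and in fact the left-hand side does not decay at all --- in the model case of the Bergman space on the disc it equals a constant times $\int_\D(1-\abs{w}^2)^{tq}\abs{1-\bar zw}^{-2}\,dA(w)\simeq 1$ for every $t>0$. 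The exponent must go the other way: take $h(w)=\norm{K_w}^{+\gamma}$ with $\gamma>0$ so small that both $p\gamma$ and $q\gamma$ are less than $\min(\kappa,2-\kappa)$ (possible because $\kappa<2$). Then for $\al=p\gamma$ or $q\gamma$ the change of variables $w=\vp_z(u)$ extracts precisely the factor $\norm{K_z}^{\al}$ and leaves $\int_\Om\abs{\ip{K_z}{K_u}}^{1-\al}\norm{K_u}^{\al-2}\,d\la(u)$, which is uniformly bounded by Lemma~\ref{RF} and \hyperref[A6]{A.6} with $r=2-\al>\kappa>s=\al>0$. This is the paper's argument.

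Your case $\kappa=0$ has the same defect in its Schur component: with $h\equiv1$ the first Schur condition reduces to $\sup_{z\in\Om}\int_\Om\abs{\ip{K_w}{K_z}}\,d\sigma(w)<\infty$, which fails (in the Fock space this integral equals $e^{\alpha\abs{z}^2/4}$), and \hyperref[A6]{A.6} with $r=s>0$ never produces it. The paper avoids Schur entirely here: it proves $p=1$ by Tonelli plus \hyperref[A6]{A.6} with $r=s=1$, proves $p=\infty$ from the pointwise bound $\abs{f(w)}\leq\norm{K_w}\sup_u\abs{f(u)}/\norm{K_u}$ (note that your displayed estimate carries the weight upside down --- the integrand should be $\abs{\ip{K_w}{K_z}}\norm{K_w}^{-1}\,d\la(w)$, not $\norm{K_w}^{-3}\,d\la(w)$), and then interpolates. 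Your endpoint arguments are essentially these, so the fix for (a) is to discard the $h\equiv1$ Schur step and state the interpolation explicitly; the fix for (b) is to flip the sign of the Schur exponent.
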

\begin{proof} (a) If $p=1$ then 
\begin{eqnarray*} \int_{\Om}\abs{Pf(z)}\frac{d\lambda(z)}{\norm{K_z}}&=&\int_{\Om}\abs{\int_{\Om}\ip{\Kbw}{\Kbz}f(w)d\sigma(w)}\frac{d\lambda(z)}{\norm{K_z}}\\
&\leq& \int_{\Om}\int_{\Om}\abs{\ip{\Kbw}{\Kbz}}\abs{f(w)}d\sigma(w)\frac{d\lambda(z)}{\norm{K_z}}\\
&=& \int_{\Om}\int_{\Om}\abs{\ip{\Kbw}{\Kbz}}\frac{d\lambda(z)}{\norm{K_z}}\abs{f(w)}d\sigma(w)\\
&\leq& C(1,1)\int_{\Om}\norm{K_w}\abs{f(w)}d\sigma(w)\\
&=& C(1,1)\int_{\Om}\abs{f(w)}\frac{d\lambda(w)}{\norm{K_w}},
\end{eqnarray*} where $C(1,1)$ is the constant from \hyperref[A6]{A.6}. So $P$ is bounded on $L^1(\Om;\frac{d\lambda(w)}{\norm{K_w}^1})$.

If $p=\infty$ then 
\begin{eqnarray*}\abs{Pf(z)}&=&\sup_{z\in\Om}\abs{\int_{\Om}\ip{\Kbw}{\Kbz}f(w)d\sigma(w)}\\
&\leq& \int_{\Om}\abs{\ip{\Kbw}{\Kbz}}\abs{f(w)}\frac{d\lambda(w)}{\norm{K_w}^2}\\
&\leq& \sup_{w\in\Om}\frac{\abs{f(w)}}{\norm{K_w}}\int_{\Om}\abs{\ip{\Kbw}{\Kbz}}\frac{d\lambda(w)}{\norm{K_w}}\\
&\leq& C(1,1)\norm{K_z} \sup_{w\in\Om}\frac{\abs{f(w)}}{\norm{K_w}}.
\end{eqnarray*} By interpolation we obtain that $P$ is bounded on $L^p(\Om;\frac{d\lambda(w)}{\norm{K_w}^p})$ for all $1\leq p\leq \infty$. 

(b) For a given $p$ choose $\gamma>0$ so that $p\gamma<\min(\kappa, 2-\kappa)$ and $q\gamma=\frac{p\gamma}{p-1}<\min(\kappa, 2-\kappa)$, where $\kappa<2$ is the constant appearing in \hyperref[A6]{A.6}. Notice that since $\kappa<2$, we have that such a number $\gamma$ exists. 

We will use the Schur's Test (Lemma~\ref{Schur}) with
\[R(z,w) = \abs{ \ip{\Kbz}{\Kbw} }, \hspace{0.2cm} h(z) \equiv \norm{\Kbz}^{\gamma}, \hspace{0.2cm}  X = \Om, \hspace{0.2cm} d\mu(w) = d\nu(w) = d\sigma(w),\]
where $\al>0$ is to be specified later.  Then we have

 \begin{eqnarray*} \int_{\Om} \abs{ \ip{\Kbz}{\Kbw}}\norm{\Kbw}^{\al}\frac{d\la(w)}{\norm{\Kbw}^2}
&=& \norm{\Kbz}\int_{\Om} \abs{\ip{\kbz}{k_{\vf_z(u)}}}\norm{K_{\vf_z(u)}}^{\al-1}\,d\la(\vf_z(u)) \\
&\simeq& \norm{\Kbz}\int_{\Om}\frac{1}{\norm{K_{\vp_z(\vp_z(u))}}}\abs{\ip{\kbz}{k_{u}}}^{1-\al} \,d\la(u) \\
&=& \norm{\Kbz}\int_{\Om}\frac{1}{\norm{K_u}}\abs{\ip{\kbz}{k_{u}}}^{1-\al}d\la(u) \\
&=& \norm{\Kbz}^{\al} \int_{\Om}\frac{\abs{\ip{\Kbz}{K_{u}}}^{1-\al}}{\norm{K_u}^{2-\al}}\,d\la(u). 
\end{eqnarray*}
If $0<\al<\min(\kappa, 2-\kappa)$ we can use Lemma~\ref{RF}, and \hyperref[A6]{A.6} with $r=2-\al$ and $s=\al$ to obtain that the integral above is bounded independent of $z$. Therefore, by taking $\al=p\gamma$ and $\al=q\gamma$ we conclude that both of the conditions in the Schur's Test are satisfied and consequently our operator $P$ is bounded on $L^p(\Om;d\sigma)$. 

 \end{proof}

\subsection{Translation Operators on Bergman-type Spaces}
 
For each $z\in\Om$ we define an adapted translation operator $U_z$ on  $\Bo$ by 
$$ U_zf(w):=f(\vp_z(w))k_z(w).$$
Using \hyperref[A1]{A.1}-\hyperref[A5]{A.5} it is easy to check that $\norm{U_zf}\simeq \norm{f}$ with implied constants independent of $z$. In addition, each $U_z$ is invertible with the inverse given by 
$$ U_z^{-1}f(w):=\frac{f(\vp_z(w))}{k_z(\vf_z(w))}.$$
The inverse also satisfies $\norm{U^{-1}_zf}\simeq \norm{f}$. Therefore, for every $f\in\Bo$ we have 
$$ \norm{f}^2=\ip{U_z^*f}{U_z^{-1}f}\leq \norm{U^*_zf}\norm{U_z^{-1}f}\lesssim \norm{U_z^*f}\norm{f}.$$
This implies that also $\norm{U^*_zf}\simeq \norm{f}$.

\begin{lm}\label{identities} The following quasi-equalities hold: 
\begin{itemize}
\item[(a)] $\norm{U_zf}\simeq \norm{f}$,
\item[(b)] $\abs{U_z^2f}\simeq\abs{f}$, 
\item[(c)] $|U_z^*k_w|\simeq|k_{\vp_z(w)}|$. 
\end{itemize}
\end{lm}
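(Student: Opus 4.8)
The plan is to establish the three quasi-equalities in turn; each reduces to the change of variables $w=\vp_z(u)$ (remembering that $\vp_z$ is an involution) combined with the estimate \hyperref[A5]{A.5} for $\abs{\ip{k_z}{k_w}}$, together with the quasi-invariance \hyperref[A4]{A.4} of $\la$ in part (a). For (a), which was already announced before the statement, I would write $\norm{U_zf}^2=\int_{\Om}\abs{f(\vp_z(w))}^2\abs{k_z(w)}^2\,d\sigma(w)$ and observe that, since $d\la(w)=\norm{K_w}^2\,d\sigma(w)$ and $k_z(w)=\norm{K_w}\ip{k_z}{k_w}$, one has $\abs{k_z(w)}^2\,d\sigma(w)=\abs{\ip{k_z}{k_w}}^2\,d\la(w)$. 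Substituting $w=\vp_z(u)$ and using $\vp_z\circ\vp_z=\textnormal{id}$ turns the integral into $\int_{\Om}\abs{f(u)}^2\abs{\ip{k_z}{k_{\vp_z(u)}}}^2\,d\la(\vp_z(u))$; now \hyperref[A5]{A.5} gives $\abs{\ip{k_z}{k_{\vp_z(u)}}}^2\simeq\norm{K_u}^{-2}$ and \hyperref[A4]{A.4} gives $d\la(\vp_z(u))\simeq d\la(u)$, so the last integral is $\simeq\int_{\Om}\abs{f(u)}^2\norm{K_u}^{-2}\,d\la(u)=\norm{f}^2$.

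For (b), I would compute directly that, because $\vp_z$ is an involution, $U_z^2f(w)=(U_zf)(\vp_z(w))\,k_z(w)=f(w)\,k_z(\vp_z(w))\,k_z(w)$, so it suffices to prove $\abs{k_z(w)\,k_z(\vp_z(w))}\simeq 1$ for every $w$. From $\abs{k_z(w)}=\norm{K_w}\abs{\ip{k_z}{k_w}}$ and \hyperref[A5]{A.5} one gets $\abs{k_z(w)}\simeq\norm{K_w}/\norm{K_{\vp_z(w)}}$; replacing $w$ by $\vp_z(w)$ and using the involution property again, $\abs{k_z(\vp_z(w))}\simeq\norm{K_{\vp_z(w)}}/\norm{K_w}$, and the product of the two estimates is $\simeq 1$.

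For (c), the first step is to record the action of $U_z^*$ on reproducing kernels. Pairing against an arbitrary $f\in\Bo$ and using the reproducing property,
\[
\ip{U_z^*K_w}{f}=\ip{K_w}{U_zf}=\overline{(U_zf)(w)}=\overline{k_z(w)}\;\overline{f(\vp_z(w))}=\ip{\overline{k_z(w)}\,K_{\vp_z(w)}}{f},
\]
so $U_z^*K_w=\overline{k_z(w)}\,K_{\vp_z(w)}$, and therefore $U_z^*k_w=\dfrac{\overline{k_z(w)}\,\norm{K_{\vp_z(w)}}}{\norm{K_w}}\,k_{\vp_z(w)}$. By the estimate $\abs{k_z(w)}\simeq\norm{K_w}/\norm{K_{\vp_z(w)}}$ already obtained in (b), the scalar coefficient multiplying $k_{\vp_z(w)}$ has modulus $\simeq 1$, which gives $\abs{U_z^*k_w(u)}\simeq\abs{k_{\vp_z(w)}(u)}$ at every point $u$, as claimed.

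I do not expect a serious obstacle here: each step is a change of variables followed by an application of \hyperref[A4]{A.4} or \hyperref[A5]{A.5}. The points that call for care are the bookkeeping of the normalization factors $\norm{K_w}$ throughout, and the complex conjugates in the adjoint identity in (c); it is also worth emphasizing that part (a) genuinely uses \hyperref[A4]{A.4}, not just \hyperref[A5]{A.5}, in order to push the measure $\la$ through $\vp_z$.
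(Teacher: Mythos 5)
Your proposal is correct and follows essentially the same route as the paper: in (a) the identity $\abs{k_z(w)}^2\,d\sigma(w)=\abs{\ip{k_z}{k_w}}^2\,d\la(w)$ followed by the change of variables $w=\vp_z(u)$ together with \hyperref[A4]{A.4} and \hyperref[A5]{A.5}; in (b) the factorization $U_z^2f(w)=f(w)k_z(\vp_z(w))k_z(w)$ with the two-sided estimate $\abs{k_z(w)}\simeq\norm{K_w}/\norm{K_{\vp_z(w)}}$; and in (c) the adjoint identity $U_z^*K_w=\overline{k_z(w)}K_{\vp_z(w)}$, which the paper obtains equivalently by evaluating $\ip{k_w}{U_zK_u}$ pointwise. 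The only differences are cosmetic (order of the substitution versus the application of \hyperref[A5]{A.5}), and your explicit remark that (a) genuinely requires \hyperref[A4]{A.4} is accurate.
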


\begin{proof} (a) \begin{eqnarray*} \norm{U_zf}^2 &=& \int_{\Om}\abs{f(\vp_z(w))}^2\abs{k_z(w)}^2d\sigma(w) = \int_{\Om}\abs{f(\vp_z(w))}^2\abs{\ip{k_z}{k_w}}^2d\la(w)\\ &\simeq& \int_{\Om} \frac{\abs{f(\vp_z(w))}^2}{\norm{K_{\vp_z(w)}}^2}d\la(w)\simeq\norm{f}^2
\end{eqnarray*}
(b) 
\begin{eqnarray*} \abs{U_z^2f(w)}&=&\abs{U_z((f\circ k_z)k_z)(w)}=\abs{f(\vp_z(\vp_z(w)))k_z(\vp_z(w))k_z(w)}\\&=&\abs{f(w)}\abs{\ip{k_z}{K_{\vp_z(w)}}}\abs{\ip{k_z}{K_w}}=\abs{f(w)}
\end{eqnarray*}
(c) $$\abs{U_z^*k_w(u)}=\abs{\ip{k_w}{U_zK_u}}=\abs{K_u(\vp_z(w))}\abs{\ip{k_z}{k_w}}\simeq\abs{\ip{K_u}{k_{\vp_z(w)}}}=\abs{k_{\vp_z(w)}(u)}.$$

\end{proof}

In case of a strong Bergman-type space $U_z$ are actually unitary operators. Moreover, in this case $U_z^2=I$ and  $\abs{U_z^*k_w}=\abs{k_{\vp_z(w)}}=\abs{U_z k_w}$.

For any given operator $T$ on $\Bo$ and $z\in\Om$ we define $T^z:=U_zTU^*_z$.

\subsection{Toeplitz Operators on Bergman-type Spaces}

Let $M_u$ denote the operator of multiplication by the function $u$, $M_u(f):=uf$. We define a Toeplitz operator with symbol $u$ to be the operator given by
$$
T_u:=PM_u,
$$ where $P$ is the usual projection operator onto $\Bo$. For a function $u:\Om\to \C$ such that $u(\vp_z(w))\in L^2(\Om;d\sigma(w))$ the product $u(w)k_z(w)$ belongs in $L^2(\Om;d\sigma)$. So, if this is true for all $z\in\Om$, the Toeplitz operator $T_u$ will at least be densely defined on $\Bo$ because it will be well defined on the span of the normalized reproducing kernels. In the next section we will provide a condition on $u$ which will guarantee that $T_u$ is bounded. 

In the classical case when $u$ is bounded it is immediate to see that $\norm{T_u}\leq \norm{u}_{L^\infty}$.  These Toeplitz operators are the key building blocks of an important object for this paper, the Toeplitz algebra $\mathcal{T}_{L^\infty}$ associated to the symbols in $L^\infty(\Om)$.  For the family of function $u_{j,l}\in L^\infty(\Om)$ with $1\leq j\leq J$ and $1\leq l\leq L$ one considers the (algebraic or aggregate) operators
$$
T=\sum_{l=1}^L \prod_{j=1}^J T_{u_{j,l}},
$$
which are nothing other then finite sums of finite products of Toeplitz operators.  Then $\mathcal{T}_{L^\infty}$ is the closure of these operators, 
$$
\mathcal{T}_{L^\infty}=\textnormal{clos}_{\mathcal{L}(\Bo,\Bo)}\left\{\sum_{l=1}^L \prod_{j=1}^J T_{u_{j,l}}: u_{j,l}\in L^\infty(\Om), J, L \textnormal{ finite}\right\}
$$
where the closure is in the operator norm on $\Bo$, i.e., $\norm{\,\cdot\,}_{\mathcal{L}(\Bo,\Bo)}$.

In the case of strong Bergman-type spaces conjugation by translations behaves particularly well with respect to Toeplitz operators. Namely, if $T=T_u$ is a Toeplitz operator then $T^z=T_{u\circ \varphi_{z}}$. Moreover, when  $T=T_{u_1}T_{u_2}\cdots T_{u_n}$ is a product of Toeplitz operators we have

$$ T^z=T_{u_1\circ \varphi_{z}}T_{u_2\circ \varphi_{z}}\cdots T_{u_n\circ \varphi_{z}}.$$

The following simple lemma will be used in what follows.

\begin{lm}\label{ToeplitzCompact}
For each bounded Borel set $G$ in $\Om$ the Toeplitz operator $T_{1_G}$ is compact on $\Bo$.
\end{lm}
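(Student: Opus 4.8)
The plan is to show that $T_{1_G}$ is a limit in operator norm of finite-rank operators, or more directly that it is the composition of a bounded operator with a compact one. The cleanest route: write $T_{1_G} = P M_{1_G}$, and observe that $M_{1_G}$ as a map from $\Bo$ into $L^2(\Om;d\sigma)$ followed by $P$ factors through multiplication by $1_G$. The compactness will come from the fact that $G$ is a bounded Borel set, hence (by property A.2, since closed balls are compact and $G$ being bounded is contained in some closed ball $\overline{D(0,R)}$) $G$ has compact closure in $(\Om,d)$, and on such a set the holomorphic functions in $\Bo$ form a normal family.

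Concretely, first I would take a sequence $f_k$ in the unit ball of $\Bo$ and show $T_{1_G} f_k$ has a norm-convergent subsequence. Since $\Bo$ is a RKHS with $\norm{K_z}$ continuous (A.3), point evaluations are uniformly bounded on the compact set $\overline{D(0,R)} \supseteq G$: $|f_k(z)| \le \norm{K_z}\norm{f_k} \le \sup_{z \in \overline{D(0,R)}}\norm{K_z} =: M < \infty$. By Montel's theorem (the $f_k$ are holomorphic on $\Om$ and locally uniformly bounded), a subsequence $f_{k_j}$ converges uniformly on compact subsets of $\Om$, in particular uniformly on $G$, to some holomorphic $f$. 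Then $\norm{1_G f_{k_j} - 1_G f}_{L^2(\Om;d\sigma)}^2 = \int_G |f_{k_j}-f|^2\,d\sigma \le \sigma(G)\,\sup_G|f_{k_j}-f|^2 \to 0$, using that $\sigma$ is a finite measure (A.3). Applying the bounded operator $P$, we get $T_{1_G} f_{k_j} = P(1_G f_{k_j}) \to P(1_G f)$ in $L^2(\Om;d\sigma)$, hence in $\Bo$. This shows $T_{1_G}$ maps the unit ball to a relatively compact set, i.e., $T_{1_G}$ is compact.

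One point that needs care is the reduction "$G$ bounded $\Rightarrow$ $\overline{G}$ compact in $(\Om,d)$": a bounded Borel set $G$ is contained in some metric ball $D(0,R)$, whose closure is compact by the finite-compactness hypothesis in A.2, so $\overline{G}$ is a closed subset of a compact set, hence compact — and it sits inside $\Om$ since closed balls are compact subsets of $\Om$. I should also confirm $T_{1_G}$ is genuinely bounded to begin with, but this is immediate since $1_G \in L^\infty(\Om)$ with $\norm{1_G}_\infty \le 1$, so $\norm{T_{1_G}} \le 1$ as noted in the text. The main obstacle, such as it is, is just marshalling the normal-families argument correctly against the abstract axioms; there is no deep difficulty, which is presumably why the lemma is labelled "simple."
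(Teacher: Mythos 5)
Your proof is correct and follows essentially the same route as the paper's: contain $G$ in a compact closed ball, use continuity of $\norm{K_z}$ to get local uniform boundedness of a unit-norm sequence, apply Montel's theorem to extract a locally uniformly convergent subsequence, and use finiteness of $\sigma$ to upgrade uniform convergence on $G$ to $L^2$ convergence before applying $P$. No substantive differences.
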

\begin{proof} 
Since $G$ is bounded there exists a closed ball $B$ such that $G\subset B$. Let $\{f_n\}$ be a unit-norm sequence in $\Bo$. Since, by~\hyperref[A3]{A.3}, $\sup_{z\in B}\norm{K_z}<\infty$ we have that the sequence $\{f_n\}$ is bounded on $B$, i. e., there exists $M$ independent on $n$ such that $\sup_{z\in B}\abs{f_n(z)}\leq M$. Since the same is true for any for any compact set in place of $B$, by Montel's theorem, there exists a subsequence $\{f_{n_k}\}$ such that $\{M_{1_B}f_{n_k}\}$ converges uniformly. This implies that  $\{M_{1_B}f_{n_k}\}$ converges in $L^2(\sigma)$. Therefore, $\{M_{1_G}f_{n_k}\}$ converges in $L^2(\sigma)$, and consequently $\{T_{1_G}f_{n_k}\}$ converges in $\Bo$. 
\end{proof}

\subsection{Geometric Decomposition of \texorpdfstring{$(\Om, d, \la)$}{the Domain}}

The proof of the crucial localization result from Section~\ref{RKTComp} will make critical use of the fact that $(\Om, d)$ has finite asymptotic dimension in the sense of M. Gromov. The finiteness of the asymptotic dimension is a very useful property which is satisfied for a great variety of domains in $\C^n$. It was stated in~\cite{Gro} (and explicitly proved in~\cite{Roe}) that every Gromov hyperbolic geodesic metric space with bounded growth at some scale has finite asymptotic dimension. It is known that every bounded strictly pseudoconvex domain with $C^2$ smooth boundary equipped with the Bergman metric is Gromov hyperbolic~\cite{Bon}. It is also known that every bounded symmetric domain equipped with the Bergman metric has a bounded growth at some scale. In particular, this implies that the unit ball $\B_n$ equipped with the usual hyperbolic metric has bounded growth at some scale (see \cite{BonS}*{page 25}). Therefore, combining these results, we have that every bounded symmetric domain equipped with the Bergman metric has finite asymptotic dimension as stated in the introduction. Related results can be found in \cites{CR,Sua,MSW,BI} where it is shown that nice domains, such as the unit ball, polydisc, or $\C^n$ have this property. 

On the other hand, it is much easier to see that metric spaces that can be equipped with a doubling quasi-invariant measure also have finite asymptotic density. Below, we give a prove of this result.


First recall the following well-known fact valid for arbitrary metric spaces.
\begin{lm}[\cite{ARS}*{Lemma 7, page 18}]
\label{MetricDecomp}
Let $(X,d)$ be a separable metric space and $r>0$.  There is a denumerable set of points $\{x_j\}$ and a corresponding set of Borel subsets $\{Q_j\}$ of $X$ that satisfy
\begin{enumerate}
\item[\textnormal{(1)}] $X=\bigcup_{j} Q_j$;
\item[\textnormal{(2)}] $Q_j\cap Q_{j'}=\emptyset$ for $j\neq j'$;
\item[\textnormal{(3)}] $D(x_j, r)\subset Q_j\subset D(x_j, 2r)$.
\end{enumerate}
Here $D(x,r)$ denotes the open ball with center $x$ and radius $r>0$ in the metric space $(X,d)$.
\end{lm}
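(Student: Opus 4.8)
The plan is to obtain the covering $\FF_r=\{F_j\}$ by starting from the decomposition $\{Q_j\}$ provided by Lemma~\ref{MetricDecomp} applied to the separable metric space $(\Om,d)$ (separability is guaranteed by \hyperref[A2]{A.2}), and then to verify that the two asserted properties hold, with the bounded-overlap property following from the doubling hypothesis on $\la$ in \hyperref[A4]{A.4}. So first I would fix $r>0$, apply Lemma~\ref{MetricDecomp} with this same $r$ to produce points $\{x_j\}$ and disjoint Borel sets $\{Q_j\}$ with $\bigcup_j Q_j=\Om$ and $D(x_j,r)\subset Q_j\subset D(x_j,2r)$, and set $F_j:=Q_j$. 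Property~(2) is then immediate: since $Q_j\subset D(x_j,2r)$, any two points of $F_j$ are within distance $4r$ of each other, so $\textnormal{diam}_d\,F_j\le 4r$.

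\textbf{The bounded-overlap estimate.} The main work is property~(1), i.e., bounding the multiplicity of the enlarged family $G_j=\{z\in\Om:d(z,F_j)\le r\}$. Note first that $F_j\subset D(x_j,2r)$ gives $G_j\subset D(x_j,3r)$. Fix a point $w\in\Om$ and let $J_w=\{j: w\in G_j\}$; I want to show $\card J_w\le N$ for some $N$ depending only on the doubling constant $C$ of $\la$. For each $j\in J_w$ we have $d(w,x_j)\le 3r$, hence $D(x_j,r)\subset D(w,4r)$, and since $D(x_j,r)\subset Q_j$ with the $Q_j$ pairwise disjoint,
\[
\sum_{j\in J_w}\la\bigl(D(x_j,r)\bigr)\le\la\bigl(D(w,4r)\bigr).
\]
On the other hand, for $j\in J_w$, $D(w,4r)\subset D(x_j,7r)$, so by iterating the doubling inequality a fixed number of times (say $m$ times, with $2^m\ge 7$, giving $\la(D(x_j,7r))\le C^m\la(D(x_j,r))$), we get $\la(D(w,4r))\le C^m\la(D(x_j,r))$ for every $j\in J_w$. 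Combining the two displays yields $\card J_w\cdot\la(D(w,4r))\le C^m\sum_{j\in J_w}\la(D(x_j,r))\cdot$\,(comparison)\,$\le C^m\la(D(w,4r))$, so $\card J_w\le C^m=:N$ once we know $0<\la(D(w,4r))<\infty$; finiteness of $\la$ on balls follows since $\la$ is built from the finite measure $\sigma$ and $\norm{K_z}$ is continuous hence bounded on the compact ball $\overline{D(w,4r)}$ (using \hyperref[A2]{A.2} and \hyperref[A3]{A.3}), and positivity follows because $\la\gtrsim\sigma$ locally (as $\norm{K_z}$ is bounded below on compacts, being continuous and positive) and $\sigma$ has full support on $\Om$—or, if a degenerate ball could have zero $\la$-measure, one simply discards the (necessarily few) sets $F_j$ with $\la(F_j)=0$ by absorbing them into a neighbor, which does not affect either property. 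This gives $N$ depending only on $C$, as claimed.

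\textbf{Anticipated obstacle.} The routine parts—applying Lemma~\ref{MetricDecomp}, checking the diameter bound, and the ball-inclusion bookkeeping $d(x_j,w)\le 3r\Rightarrow D(x_j,r)\subset D(w,4r)\subset D(x_j,7r)$—are straightforward. The one point requiring care, and the place where the hypotheses must be used honestly, is ensuring that $\la(D(w,4r))$ is both finite and strictly positive so that it can be cancelled in the final inequality; this is where \hyperref[A2]{A.2} (closed balls compact), \hyperref[A3]{A.3} (continuity of $\norm{K_z}$ and finiteness of $\sigma$), and \hyperref[A4]{A.4} (doubling) all enter. A second minor subtlety is that the resulting $N$ should be stated as depending only on the doubling constant; this is transparent from the estimate above since $m$ is an absolute integer (determined by the numeric ratio $7$) and $N=C^m$. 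With these points addressed, setting $\FF_r=\{F_j\}=\{Q_j\}$ completes the proof.
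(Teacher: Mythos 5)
There is a fundamental mismatch here: the statement you were asked to prove is Lemma~\ref{MetricDecomp} itself, i.e.\ the abstract metric-space fact (quoted in the paper from \cite{ARS} without proof) that any separable metric space admits a countable family of points $\{x_j\}$ and pairwise disjoint Borel sets $\{Q_j\}$ with $D(x_j,r)\subset Q_j\subset D(x_j,2r)$ covering $X$. Your argument never proves this; it \emph{invokes} Lemma~\ref{MetricDecomp} as a black box in its very first step and then goes on to establish the finite-overlap covering statement, which is Proposition~\ref{Covering} of the paper, a different (and later) result. As a proof of the statement in question your text is therefore circular: the existence of the points $x_j$ and of the disjoint Borel sets $Q_j$ sandwiched between $D(x_j,r)$ and $D(x_j,2r)$ is exactly what has to be constructed, and no construction appears. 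Moreover, the hypotheses you lean on (doubling of $\la$, compactness of closed balls, finiteness of $\sigma$) are not even available in the setting of the lemma, which concerns a bare separable metric space with no measure at all.

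What a proof of the lemma actually requires is a purely metric construction, roughly as follows: use separability (or Zorn's lemma plus separability to keep the family countable) to pick a maximal $2r$-separated set $\{x_j\}_{j\ge1}$, so that the open balls $D(x_j,r)$ are pairwise disjoint while the balls $D(x_j,2r)$ cover $X$ by maximality; then disjointify, e.g.\ set
\begin{equation*}
Q_j:=D(x_j,r)\cup\Bigl(D(x_j,2r)\setminus\bigl(\textstyle\bigcup_{i}D(x_i,r)\cup\bigcup_{i<j}Q_i\bigr)\Bigr),
\end{equation*}
and check that these Borel sets are pairwise disjoint, cover $X$, and satisfy $D(x_j,r)\subset Q_j\subset D(x_j,2r)$. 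None of this appears in your write-up. (For what it is worth, the part you did write is essentially the paper's own proof of Proposition~\ref{Covering}, with the minor difference that the paper compares $\la(D(x_j,r))$ to $\la(D(0,cr))$ via quasi-invariance rather than cancelling $\la(D(w,4r))$, but that does not address the statement you were assigned.)
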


Using this fact we now prove that if $(\Omega, d)$ can be equipped with a quasi-invariant doubling measure $\lambda$, then it satisfies \hyperref[A8]{A.8}, i.e., it has finite asymptotic dimension in the sense of Gromov.
By \hyperref[A2]{A.2} the set $(\Omega, d)$ is a separable metric space, and so by Lemma \ref{MetricDecomp} for each $r>0$ there is a collection of points $\{x_j\}\in\Omega$ and Borel sets $F_j:=Q_j\subset\Omega$ so that $\mathcal{F}_r:=\{F_j\}$ is a disjoint covering of $\Omega$.  Also, by Lemma \ref{MetricDecomp} we have that
$$
\textnormal{diam}_d\, F_j \leq \textnormal{diam}_d\, D(x_j,2r) =4r,
$$
proving (\hyperref[Diameter]{2}) from \hyperref[A8]{A.8}.

It remains to prove (\hyperref[Finite]{1}) from \hyperref[A8]{A.8}, namely that we have the finite overlap property for the sets $G_j=\{z\in\Om: d(z, F_j)\leq r\}$.  Since we clearly have $G_j\subset D(x_j, 4r)$ it suffices to prove that the corresponding balls $D(x_j, 4r)$ have a finite intersection property.  Suppose that $z\in \cap_{l=1}^N D(x_l, 4r)$.  Since $(\Omega,d)$ is a metric space we have that
$$
\bigcup_{l=1}^N D(x_l, r)\subset D(x_k, 9r).
$$
Here we have fixed one of the balls to play a distinguished role, namely the ball centered at $x_k$.  By the construction we have that $D(x_l,r)$ are disjoint in $\Omega$.  By properties \hyperref[A1]{A.1} and \hyperref[A2]{A.2} we have that $\la\left(D(x_j, r)\right)\simeq \la\left(D(0, cr)\right)$ for any $r>0$, for all $j$, and for some constant $c>0$ independent of $j, r$.  So we have that
$$
\la\left(D(0, r)\right)N\simeq \sum_{l=1}^N \la\left(D(x_{l}, r)\right)\leq \la\left(D(x_{k}, 9r)\right)\simeq \la\left(D(0, 9cr)\right).
$$
This gives that
$$
N\lesssim \frac{\la\left(D(0, 9cr)\right)}{\la\left(D(0, r)\right)}
$$
and this is a constant depending only on the doubling constant of the measure $\la$.

\section{Reproducing kernel thesis for boundedness}

In this section we prove that many operators on Bergman-type spaces have a property reminiscent of the classical reproducing kernel thesis (RKT). Ideally we would like to show that the condition 
$\sup_{z\in\Om}\norm{Tk_z}<\infty$ is sufficient for $T$ to be bounded. However, this is most probably not true in general. Below, we show that if we impose a stronger condition we can get the desired boundedness. Our condition will be on $U_zTk_z$ instead on $Tk_z$.  We need to keep $U_z$ since we don't generally have that $\norm{U_z}_{L^p(\Om;d\sigma)}\simeq 1$ for $p>2$.

\begin{thm}\label{RKT} Let $T:\Bo\to\Bo$ be a linear operator defined a priori only on the linear span of the normalized reproducing kernels of $\Bo$. Assume that there exists an operator $T^*$ defined on the same span such that the duality relation $\ip{Tk_z}{k_w}=\ip{k_z}{T^*k_w}$ holds for all $z,w\in\Om$. Let  $\kappa$ be the constant from \hyperref[A6]{A.6}. If
\begin{equation} \sup_{z\in\Om}\norm{U_zTk_z}_{L^p(\Om;d\sigma)}<\infty \hspace{.3cm} \text{and} \hspace{.3cm}  \sup_{z\in\Om}\norm{U_zT^*k_z}_{L^p(\Om;d\sigma)}<\infty
\end{equation} for some $p>\frac{4-\kappa}{2-\kappa}$ then $T$ can be extended to a bounded operator on $\Bo$.  
\end{thm}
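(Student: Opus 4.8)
The plan is to reduce the theorem to a bilinear estimate on a dense subspace and then control the resulting integral operator by Schur's test.

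\emph{Reduction.} The linear span of $\{\kw:w\in\Om\}$ is dense in $\Bo$, so it suffices to prove $|\ip{Tf}{g}|\lesssim\norm f\,\norm g$ whenever $f,g$ lie in this span. For such $f,g$, being finite linear combinations of normalized reproducing kernels, the reproducing formula $f=\int_\Om\ip f{\kw}\kw\,d\la(w)$ applies termwise, which gives
$$
\ip{Tf}{g}=\int_\Om\int_\Om\ip f{\kw}\,\overline{\ip g{\kz}}\;\ip{T\kw}{\kz}\,d\la(w)\,d\la(z).
$$
Since $\norm f^2=\int_\Om|\ip f{\kw}|^2\,d\la(w)$, this reduces the theorem to the $L^2(\Om;d\la)$-boundedness of the integral operator with non-negative kernel $R(z,w):=|\ip{T\kw}{\kz}|$, which I would obtain from Lemma~\ref{Schur}.

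\emph{Pointwise decay of $\ip{T\kw}{\kz}$.} The key is to bring in the hypothesis through the adapted translations. From the identity $\ip{T\kw}{\kz}=\ip{U_wT\kw}{(U_w^{-1})^{*}\kz}$, Hölder's inequality with exponents $p$ and $q:=p/(p-1)$, and $\sup_w\norm{U_wT\kw}_{L^p(\Om;d\sigma)}<\infty$, one gets $|\ip{T\kw}{\kz}|\lesssim\norm{(U_w^{-1})^{*}\kz}_{L^q(\Om;d\sigma)}\simeq\norm{k_{\vp_w(z)}}_{L^q(\Om;d\sigma)}$, the last step by the properties of the translation operators. One then estimates $\norm{k_v}_{L^q(\Om;d\sigma)}$ by writing $\norm{k_v}_{L^q}^q=\norm{K_v}^{-q}\int_\Om|\ip{K_v}{K_u}|^q\norm{K_u}^{-2}\,d\la(u)$ and applying \hyperref[A6]{A.6} and Lemma~\ref{RF} (after a Cauchy--Schwarz split of part of the integrand to bring the exponents into the admissible range), obtaining $\norm{k_v}_{L^q(\Om;d\sigma)}\lesssim\norm{K_v}^{(q-2)/q}$ for $1<q<2$. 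Feeding this back and using \hyperref[A5]{A.5} in the form $\norm{K_{\vp_w(z)}}\simeq|\ip{\kw}{\kz}|^{-1}$ yields
$$
|\ip{T\kw}{\kz}|\;\lesssim\;|\ip{\kw}{\kz}|^{\,\delta},\qquad \delta=1-\frac{2}{p}>0 .
$$
The companion hypothesis on $T^{*}$ enters through $\ip{T\kw}{\kz}=\overline{\ip{T^{*}\kz}{\kw}}$ and supplies the symmetric version of this bound needed when verifying both inequalities in Schur's test.

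\emph{Schur's test.} With this decay I would run Lemma~\ref{Schur} on $L^2(\Om;d\la)$ with test function $h(z)=\norm{K_z}^{-\beta}$. Expanding $|\ip{\kw}{\kz}|^{\delta}=|\ip{K_w}{K_z}|^{\delta}\norm{K_w}^{-\delta}\norm{K_z}^{-\delta}$, each of the two Schur conditions becomes an instance of \hyperref[A6]{A.6} together with Lemma~\ref{RF}, valid as soon as $\beta$ can be chosen inside a suitable window; tracking the powers of $\norm{K_\cdot}$ that appear, this window is non-empty exactly under the hypothesis $p>\frac{4-\kappa}{2-\kappa}$. (When $\kappa=0$ one uses instead the $r=s$ form of \hyperref[A6]{A.6}; the quasi-invariance and doubling of $\la$ in \hyperref[A4]{A.4} and the covering Proposition~\ref{Covering} stand behind these geometric estimates.) Once $|\ip{Tf}{g}|\lesssim\norm f\,\norm g$ holds on the span, density furnishes the bounded extension of $T$ to all of $\Bo$.

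The step I expect to be the main obstacle is precisely the exponent bookkeeping linking the last two parts: making the weighted $L^q$-estimate for reproducing kernels and the two Schur inequalities simultaneously compatible with the ranges permitted by \hyperref[A6]{A.6}, which is what forces the stated lower bound on $p$. A related delicate point is that one should exploit the holomorphy of $f$ and $g$ (for instance a sub-mean-value inequality for $w\mapsto|\ip f{\kw}|$ over the discs $D(w,r)$, available since $\norm{K_u}\simeq\norm{K_w}$ for $u\in D(w,r)$) rather than treating $|\ip f{\kw}|$, $|\ip g{\kz}|$ as arbitrary elements of $L^2(\Om;d\la)$, since the crude integral-operator reduction may otherwise be too lossy to reach the full range of $p$.
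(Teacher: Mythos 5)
Your overall architecture (reduce to the span of kernels, majorize by a positive integral operator, run Schur's test with a power of $\norm{K_z}$ as test function) is exactly the paper's. The divergence, and the gap, is in \emph{where} you apply H\"older. You first extract a pointwise bound $\abs{\ip{Tk_w}{k_z}}\lesssim \norm{k_{\vp_w(z)}}_{L^q(\sigma)}\simeq\abs{\ip{k_w}{k_z}}^{\delta}$ with $\delta=1-2/p$, and only then run Schur's test on the scalar kernel $\abs{\ip{k_w}{k_z}}^{\delta}$. Writing $\abs{\ip{k_w}{k_z}}^\delta=\abs{\ip{K_w}{K_z}}^{\delta}\norm{K_w}^{-\delta}\norm{K_z}^{-\delta}$ and testing against $h(z)=\norm{K_z}^{-\beta}$, the Schur condition becomes an instance of \hyperref[A6]{A.6} with $\frac{r+s}{2}=\delta$, $r=\delta+2\beta$, $s=\delta-2\beta$; the window $r>\kappa>s>0$ is nonempty only when $\delta>\kappa/2$, i.e.\ $p>\frac{4}{2-\kappa}$. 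This threshold is strictly larger than the claimed $\frac{4-\kappa}{2-\kappa}$ for every $\kappa\in(0,2)$ (for the classical Bergman space, $\kappa=1$, you get $p>4$ instead of $p>3$), and it is not an artifact of a bad choice of $\beta$: in the disc the operator with kernel $(1-\abs{\vp_w(z)}^2)^{\delta}$ on $L^2(d\la)$ genuinely fails to be bounded for small $\delta$. So your argument proves the theorem only for a strictly smaller range of $p$ than stated. (Your preliminary $L^q$ estimate $\norm{k_v}_{L^q(\sigma)}\lesssim\norm{K_v}^{(q-2)/q}$ also requires care --- naively it needs $s=2q-2<\kappa$ --- though that part can be rescued by shedding part of the exponent with the trivial bound $\abs{\ip{K_v}{K_u}}\leq\norm{K_v}\norm{K_u}$ before invoking \hyperref[A6]{A.6}.)

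The paper avoids this loss by never taking a pointwise bound on the kernel. It runs Schur's test directly on $R(z,w)=\abs{\ip{T^*K_z}{K_w}}$ with $h(z)=\norm{K_z}^{\al/2}$, changes variables $w=\vp_z(u)$ to rewrite the Schur integral as $\norm{K_z}^{\al}\int\abs{U_zT^*k_z(u)}\,\abs{\ip{K_z}{K_u}}^{1-\al}\norm{K_u}^{\al-2}\,d\la(u)$, and only \emph{then} applies H\"older, so that the full $L^p$ mass of $U_zT^*k_z$ is paired against the dual factor $\bigl(\int\abs{\ip{K_z}{K_u}}^{q(1-\al)}\norm{K_u}^{-q(2-\al-2/p)}d\la(u)\bigr)^{1/q}$. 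The exponents $r=2-q\al$ and $s=q(\al-2/p)$ then fit \hyperref[A6]{A.6} precisely when $\al$ can be chosen in $(\frac{2}{p},\frac{4-2\kappa}{4-\kappa})$, which is exactly the condition $p>\frac{4-\kappa}{2-\kappa}$. Your closing suggestion that one must exploit holomorphy of $f,g$ via a sub-mean-value argument is a red herring: the integral-operator reduction is not the lossy step; the premature H\"older is. To repair your proof, keep $\abs{U_zT^*k_z(u)}$ inside the Schur integral and apply H\"older there, as above; the companion condition is then obtained by interchanging $T$ and $T^*$, which is where the hypothesis on $T^*$ enters.
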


In the case of the unweighted Bargmann-Fock space this results corresponds to~\cite{CWZ}*{Theorem 4} and in the case of the unweighted Bergman space it corresponds to~\cite{DT}*{Proposition 2.5}.

\begin{rem} In view of Lemma~\ref{proj}, it would be interesting to see if for $\kappa=0$ the conditions 
\begin{equation} \sup_{z\in\Om}\norm{U_zTk_z}_{L^p(\Om;d\frac{\sigma(w)}{\norm{K_w}^p})}<\infty \hspace{.3cm} \text{and} \hspace{.3cm}  \sup_{z\in\Om}\norm{U_zT^*k_z}_{L^p(\Om;d\frac{\sigma(w)}{\norm{K_w}^p})}<\infty
\end{equation} for some $p>2$ are sufficient for $T$ to be bounded on $\Bo$. 
\end{rem}

\begin{proof}  
Since the linear space of all the normalized reproducing kernels is dense in $\Bo$ it will be enough to show that there exists a constant $C<\infty$ such that $\norm{Tf}\lesssim\norm{f}$ for all $f$ that are in the linear span of the reproducing kernels. Notice first that for any such $f$ we have 

\begin{eqnarray*} \norm{Tf}^2&=&\int_{\Omega}\abs{\ip{Tf}{K_z}}^2d\sigma(z)=\int_{\Omega}\abs{\ip{f}{T^*K_z}}^2d\sigma(z)\\
&\leq& \int_{\Omega}\abs{\int_{\Om}\ip{f}{K_w}\ip{K_w}{T^*K_z}d\sigma(w)}^2d\sigma(z)\\
&\leq& \int_{\Omega}\abs{\int_{\Om}\abs{\ip{K_w}{T^*K_z}}\abs{f(w)}d\sigma(w)}^2d\sigma(z).
\end{eqnarray*}
Consider the integral operator  $Rf(z):=\int_{\Om}  \abs{ \ip{T^*\Kbz}{\Kbw} }f(w)d\sigma(w)$. It is enough to show that this operator is bounded as an operator on $L^2(\Om;d\sigma)$. 
To do this we will use the Schur's Test (Lemma~\ref{Schur}) with
\[R(z,w) = \abs{ \ip{T^*\Kbz}{\Kbw} }, \hspace{0.15cm} h(z) \equiv \norm{\Kbz}^{\al/2}, \hspace{0.15cm}  X = \Om, \hspace{0.15cm} d\mu(z) =  d\nu(z) = d\sigma(z).\]
If $\kappa=0$ set $\al=\frac{4-2\kappa}{4-\kappa}=1$. If $\kappa>0$ choose $\alpha\in (\frac{2}{p}, \frac{4-2\kappa}{4-\kappa})$ such that $q(\alpha-\frac{2}{p})<\kappa$. The condition $p>\frac{4-\kappa}{2-\kappa}$ ensures that such $\alpha$ exists. 
Let $z\in\Om$ be arbitrary and fixed. We have

\begin{eqnarray*} \int_{\Om} R(z, w)\norm{\Kbw}^{\al} \,d\sigma(w) &=& \int_{\Om} \abs{ \ip{T^*\Kbz}{\Kbw}}\norm{\Kbw}^{\al} \,d\sigma(w)\\
&=&  \norm{\Kbz}\int_{\Om} \abs{\ip{T^*\kbz}{k_{\vf_z(u)}}}\norm{K_{\vf_z(u)}}^{\al-1}\,d\la(u) \\
&\simeq& \norm{\Kbz}\int_{\Om}\abs{\ip{T^*k_{z}}{U^*_zk_{u}}}\abs{\ip{\kbz}{k_{u}}}^{1-\al} \,d\la(u) \\
&=& \norm{\Kbz}\int_{\Om}\abs{\ip{U_zT^*k_z}{k_{u}}}\abs{\ip{\kbz}{k_{u}}}^{1-\al}\,d\la(u) \\
&=& \norm{\Kbz}^{\al} \int_{\Om}\frac{\abs{U_zT^*k_z(u)}\abs{\ip{\Kbz}{K_{u}}}^{1-\al}}{\norm{K_u}^{2-\al}}\,d\la(u). 
\end{eqnarray*}
Using Holder's inequality we obtain that the last expression is no greater than
$$
\norm{\Kbz}^{\al}\left(\int_{\Om}\abs{U_zT^*k_z(u)}^p\,d\sigma(u)\right)^{\frac{1}{p}}\left(\int_{\Om}\frac{\abs{\ip{\Kbz}{K_{u}}}^{q(1-\al)}}{\norm{K_u}^{q\left(2-\al-\frac{2}{p}\right)}}\,d\la(u)\right)^{\frac{1}{q}}.
$$
We can use Lemma~\ref{RF} to deduce that the second integral above is bounded by a constant independent of $z$. Indeed, set $r=q\left(2-\al-\frac{2}{p}\right)$ and $s=r-2q(1-\al)$. By the choice of $p$ and $\al$ we have $r=\frac{p\left(2-\al-\frac{2}{p}\right)}{p-1}=2-\frac{p\al}{p-1}>\kappa$ and $s=q\left(\al-\frac{2}{p}\right)<\kappa$ if $\kappa>0$, and $r=s>\kappa$ if $\kappa=0$. Therefore, $r$ and $s$ satisfy all the conditions of \hyperref[A6]{A.6}. So by Lemma~\ref{RF} we obtain:
$$
\int_{\Om} R(z, w)\norm{\Kbw}^{\al} \,d\sigma(w)\leq C \sup_{z\in\Om}\norm{U_zT^*k_z}_{L^p(\sigma)} \norm{\Kbz}^{\al},
$$ where $C$ is the constant from \hyperref[A6]{A.6}.  To check the second condition we use exactly the same estimates as above, but interchanging the roles of $T$ and $T^*$ and obtain that 

$$
\int_{\Om} R(z, w)\norm{K_z}^{\al} \,d\sigma(z)\leq C \sup_{z\in\Om}\norm{U_zTk_z}_{L^p(\sigma)} \norm{K_w}^{\al}.
$$
Therefore $R$ is bounded operator on $L^2(\Om;d\sigma)$ and hence $T$ is bounded too.
\end{proof}

\subsection{RKT for Toeplitz operators}
In the case when $T=T_u$ is a Toeplitz operator the conditions in the above Theorem  have a much nicer form. However, unless the symbol is holomorphic, we need to assume that the space $\Bo$ is a strong Bergman-type space.  Then we have the following corollary.

\begin{cor} Let $\Bo$ be a strong Bergman-type space.
If $\kappa>0$ and $T_u$ is a Toeplitz operator whose symbol $u$ satisfies $$\sup_{z\in\Om}\int_{\Om}\abs{u(\vp_z(w))}^pd\sigma(w)<\infty,$$ for some $p>\frac{4-\kappa}{2-\kappa}$ then $T_u$ is bounded on $\Bo$.
\end{cor}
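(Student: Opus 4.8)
The plan is to deduce this from Theorem~\ref{RKT} applied to $T=T_u$, with the role of $T^*$ played by the formal adjoint $T_{\bar u}=PM_{\bar u}$, so the task reduces to checking the hypotheses of that theorem. First I would see that $T_u$ and $T_{\bar u}$ are densely defined on the span of the normalized reproducing kernels: the integrability assumption on $u$ forces $uk_z\in L^2(\Om;d\sigma)$ for every $z$, since changing variables $w\mapsto\vp_z(w)$ and invoking \hyperref[A2]{A.2}--\hyperref[A5]{A.5} (exactly as in the computation behind Lemma~\ref{RF}) gives $\norm{uk_z}^2\simeq\int_\Om\abs{u(\vp_z(w))}^2\,d\sigma(w)$, and because $\sigma$ is finite and $p>2$ — note $\frac{4-\kappa}{2-\kappa}>2$ precisely because $\kappa>0$ — Hölder's inequality bounds this by $\sigma(\Om)^{1-2/p}\big(\sup_z\int_\Om\abs{u\circ\vp_z}^p\,d\sigma\big)^{2/p}<\infty$; the same holds for $\bar u$ since $\abs{\bar u}=\abs{u}$. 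The duality relation is then immediate: using that $P$ is the self-adjoint projection onto $\Bo$ and $Pk_w=k_w$, one has $\ip{T_uk_z}{k_w}=\ip{uk_z}{k_w}=\ip{k_z}{\bar uk_w}=\ip{k_z}{T_{\bar u}k_w}$.

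The substantive point is to verify the two reproducing-kernel bounds, and this is where the strong Bergman-type hypothesis is used. For such spaces $U_z$ is unitary with $U_z^2=I$ and conjugation intertwines $T_u$ with $T_{u\circ\vp_z}$, so $U_zT_uk_z=T_{u\circ\vp_z}(U_zk_z)=P\big((u\circ\vp_z)\cdot U_zk_z\big)$. Moreover $\abs{U_zk_z}\equiv 1$ on $\Om$: from the definition of $U_z$ and two applications of \hyperref[A5]{A.5} (with equality),
$$\abs{U_zk_z(w)}=\abs{k_z(\vp_z(w))}\,\abs{k_z(w)}=\frac{\norm{K_{\vp_z(w)}}}{\norm{K_w}}\cdot\frac{\norm{K_w}}{\norm{K_{\vp_z(w)}}}=1.$$
Since $\kappa>0$, Lemma~\ref{proj}(b) says $P$ is bounded on $L^p(\Om;d\sigma)$ for $1<p<\infty$, whence
$$\norm{U_zT_uk_z}_{L^p(\Om;d\sigma)}\le\norm{P}_{L^p\to L^p}\Big(\int_\Om\abs{u(\vp_z(w))}^p\,d\sigma(w)\Big)^{1/p},$$
which is bounded uniformly in $z$ by hypothesis. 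Running the identical argument with $\bar u$ in place of $u$ (again using $\abs{\bar u}=\abs{u}$) gives the corresponding bound for $\sup_z\norm{U_zT_{\bar u}k_z}_{L^p(\Om;d\sigma)}$.

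With $p>\frac{4-\kappa}{2-\kappa}$, both hypotheses of Theorem~\ref{RKT} now hold, and it follows that $T_u$ extends to a bounded operator on $\Bo$. The only delicate ingredients are the two consequences of the strong structure used above — the intertwining $U_zT_uU_z^*=T_{u\circ\vp_z}$ and the identity $\abs{U_zk_z}\equiv 1$ — together with the bookkeeping needed to see that all the integrals in the setup are finite; the rest is a direct appeal to Theorem~\ref{RKT} and Lemma~\ref{proj}.
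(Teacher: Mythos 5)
Your proposal is correct and follows essentially the same route as the paper: reduce to Theorem~\ref{RKT} with $T^*=T_{\bar u}$, and verify the hypothesis by showing $\abs{U_zT_uk_z}=\abs{P\big((u\circ\vp_z)\cdot(\text{unimodular})\big)}$ and invoking the $L^p$-boundedness of $P$ from Lemma~\ref{proj}(b). The only (harmless) difference is cosmetic: the paper establishes the key identity by a direct change-of-variables computation with the reproducing formula and the fact that $k_0\equiv 1$, whereas you use the intertwining $U_zT_uU_z^*=T_{u\circ\vp_z}$ together with $\abs{U_zk_z}\equiv 1$ — an equally valid (arguably cleaner) use of the strong Bergman-type structure.
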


\begin{proof} We will first show that $\abs{U_zT_uk_z(w)}=\abs{P(u\circ\vp_z)(w)}$. Notice that by~\ref{A5}, $\abs{k_0(w)}=1$ for all $w\in\Om$. Therefore, by holomorphicity it has to be constant on all $\Om$. Since $k_0(0)=\norm{K_0}>0$ we have that $k_0(w)\equiv 1$. By Lemma~\ref{proj} we then have 
$$  \sup_{z\in\Om}\norm{U_zTk_z}_{L^p(\Om;\sigma)}^p\lesssim  \sup_{z\in\Om}\int_{\Om}\abs{u(\vp_z(w))}^pd\sigma(w)<\infty. 
$$
Since $T^*_u=T_{\bar{u}}$ we will automatically have the other condition in Theorem~\ref{RKT} satisfied as well, and we will be done.
Next we show $\abs{U_zT_uk_z(w)}=\abs{P(u\circ\vp_z)(w)}$. The equality $\abs{U_z^*k_w}=\abs{k_{\vp_z(w)}}$ will be used several times.

\begin{eqnarray*}  \abs{U_zT_uk_z(w)} &=& \norm{K_w}\abs{\ip{U_zT_uk_z}{k_w}} \\
&=&\norm{K_w}\abs{\int_{\Om}T_uk_z(a)\overline{k_{\vp_z(w)}(a)}d\sigma(a)} \\
&=&\norm{K_w}\abs{\int_{\Om}u(a)k_z(a)\overline{k_{\vp_z(w)}(a)}d\sigma(a)} \\
&=&\norm{K_w}\abs{\int_{\Om}u(a)\ip{k_z}{k_a}\overline{\ip{k_{\vp_z(w)}}{k_a}}d\lambda(a)} \\
&=&\norm{K_w}\abs{\int_{\Om}u(\vp_z(b))\ip{k_z}{k_{\vp_z(b)}}\overline{\ip{k_{\vp_z(w)}}{k_{\vp_z(b)}}}d\lambda(b)}\\ 
&=&\abs{\int_{\Om}u(\vp_z(b))k_0(b)\overline{\ip{K_{w}}{K_{b}}}d\sigma(b)}=\abs{P(u\circ\vp_z)(w)}.
\end{eqnarray*}

\end{proof}

\subsection{RKT for product of Toeplitz operators with analytic symbols} We now derive a sufficient condition for boundedness for a product of Toeplitz operators  $T_fT_{\bar{g}}$ with $f, g\in \Bo$. In the case of the classical Bergman space a well known conjecture of D. Sarason claims that 
\begin{equation}
\sup_{z\in\Om}\int_{\D}\abs{f(\vp_z(w))}^{2}dA(w)\int_{\D}\abs{g(\vp_z(w))}^{2}dA(w)<\infty 
\end{equation}
 is sufficient for boundedness of $T_fT_{\bar{g}}$. It was proved by Zhang and Stroethoff in \cite{SZ2} that a slightly stronger condition is sufficient. They proved that if for some $\epsilon>0$ the following condition is satisfied 
\begin{equation}\label{Zhang}
\sup_{z\in\Om}\int_{\D}\abs{f(\vp_z(w))}^{2+\epsilon}dA(w)\int_{\D}\abs{g(\vp_z(w))}^{2+\epsilon}dA(w)<\infty,
\end{equation} then $T_fT_{\bar{g}}$ must be bounded. This was extended later to the weighted case setting in~\cite{PS}. Very recently Aleman, Pott, and Reguera~\cite{ARS} disproved the Sarason conjecture in the classical Bergman space. Here we provide another sufficient condition which is in general not comparable with~\eqref{Zhang}. The generality of our approach yields also a corresponding statement for the Bargmann-Fock space. It should be mentioned that in the Bargmann-Fock space a different, more explicit necessary and sufficient condition for boundedness is given in~\cite{CPZ}.   
\begin{cor} Let $\Bo$ be a strong Bergman-type space such that a product of any two reproducing kernels from $\Bo$ is still in $\Bo$. Let $f, g\in\Bo$.
If there exists $p>\frac{4-\kappa}{2-\kappa}$ such that  
\begin{equation*} 
\sup_{z\in\Om}\abs{g(z)}^p\int_{\Om}\abs{f(\vp_z(w))}^{p}d\sigma(w)<\infty,
\end{equation*} and 
\begin{equation*} 
\sup_{z\in\Om}\abs{f(z)}^p\int_{\Om}\abs{g(\vp_z(w))}^{p}d\sigma(w)<\infty,
\end{equation*}
then the operator $T_fT_{\bar{g}}$ is bounded on $\Bo$.
\end{cor}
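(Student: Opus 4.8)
The strategy is to reduce the boundedness of $T_fT_{\bar g}$ to an application of Theorem~\ref{RKT}, so I need to verify the two reproducing-kernel estimates for $T:=T_fT_{\bar g}$ and its adjoint $T^*=T_gT_{\bar f}$. The key computation is to identify $U_z (T_fT_{\bar g}) k_z$ explicitly. First I would observe, exactly as in the proof of the previous corollary, that in a strong Bergman-type space $k_0\equiv 1$ and that conjugation by $U_z$ turns a product of Toeplitz operators into the product of the conjugated symbols: $(T_fT_{\bar g})^z = T_{f\circ\vf_z}T_{\overline{g\circ\vf_z}}$. Next, I would unwind $T_{\bar g}k_z$: since $\overline{g(\vf_z(\cdot))}$ applied to $k_z$ and then projected can be evaluated against reproducing kernels, and because $g$ is holomorphic, $T_{\bar g}$ acts on holomorphic functions essentially by the scalar $\overline{g(z)}$ after the right change of variables — more precisely $T_{\bar g}k_z = \overline{g(z)}\,k_z$ is the heuristic one expects, and the rigorous version will come out of the integral manipulation against $k_w$ using A.5 and the change of variable $a=\vf_z(b)$, just as in the $\abs{U_zT_uk_z(w)}=\abs{P(u\circ\vf_z)(w)}$ computation above.

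**Main steps.** So the plan is: (1) establish $\abs{U_z(T_fT_{\bar g})k_z(w)} = \abs{g(z)}\,\abs{P(f\circ\vf_z)(w)}$, by the same sequence of substitutions used in the proof of the Toeplitz corollary — pair against $k_w$, write $\abs{U_z^*k_w}=\abs{k_{\vf_z(w)}}$, pass inner products to integrals against $d\lambda$ via A.5, change variables $a=\vf_z(b)$, and recognize the scalar $g(z)$ coming out because the $\overline{g}$-factor, after the substitution, is evaluated holomorphically at the point that $\vf_z$ sends to $0$. Here the hypothesis that the product of two reproducing kernels stays in $\Bo$ is what guarantees $T_{\bar g}k_z$ is a genuine element of $\Bo$ and all these manipulations are legitimate. (2) Apply Lemma~\ref{proj}(b) (the $L^p$-boundedness of $P$ for $\kappa>0$; for $\kappa=0$ one should note $p>\frac{4-\kappa}{2-\kappa}=2$ forces $\kappa>0$, so Lemma~\ref{proj}(b) always applies) to get
\[
\sup_{z\in\Om}\norm{U_z(T_fT_{\bar g})k_z}_{L^p(\Om;d\sigma)}^p \lesssim \sup_{z\in\Om}\abs{g(z)}^p\int_\Om\abs{f(\vf_z(w))}^p\,d\sigma(w)<\infty,
\]
which is the first hypothesis of the corollary. (3) Repeat verbatim with $f$ and $g$ interchanged to handle $T^*=T_gT_{\bar f}$, using the second hypothesis. (4) Since $p>\frac{4-\kappa}{2-\kappa}$, Theorem~\ref{RKT} applies and gives boundedness of $T_fT_{\bar g}$ on $\Bo$. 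One should also remark at the start that both $T_fT_{\bar g}$ and its adjoint are a priori well-defined on the span of normalized reproducing kernels precisely because products of reproducing kernels lie in $\Bo$, so the duality relation of Theorem~\ref{RKT} holds.

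**Anticipated obstacle.** The only genuinely delicate point is step (1): correctly extracting the scalar $\abs{g(z)}$ from $T_{\bar g}k_z$ inside the chain of changes of variables, and making sure the holomorphicity of $g$ (not just its modulus) is used at the right moment — the argument that a bounded-modulus holomorphic function pulled back and normalized becomes constant is the analogue of the $k_0\equiv 1$ observation, and one must be careful that it is $g$ evaluated at $z$ (the image of $0$ under $\vf_z$) that appears, with the correct conjugation. Everything else is a routine repetition of the estimates already carried out in the proof of the preceding corollary, together with a direct invocation of Lemma~\ref{proj} and Theorem~\ref{RKT}.
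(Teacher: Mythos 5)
Your plan is essentially the paper's proof: establish $T_{\bar g}k_z=\overline{g(z)}k_z$ (using the product-of-kernels hypothesis to justify this first for finite linear combinations of kernels, then by approximation), compute $U_zT_fT_{\bar g}k_z$, and feed the resulting bound into Theorem~\ref{RKT}. One correction, though: your step (2) and the parenthetical about $\kappa=0$ are wrong as written. The inequality $p>\frac{4-\kappa}{2-\kappa}$ does not ``force $\kappa>0$'' --- $\kappa$ is a fixed parameter of the space (it equals $0$ for the Fock space), so Lemma~\ref{proj}(b) is simply unavailable in that case and your invocation of it leaves a gap there. The detour through $P$ is in fact unnecessary: the paper avoids it by writing $\ip{T_fk_z}{k_{\vp_z(w)}}=\ip{k_z}{T_{\bar f}k_{\vp_z(w)}}=f(\vp_z(w))\ip{k_z}{k_{\vp_z(w)}}$, so that $f(\vp_z(w))$ comes out as a scalar and one gets the exact identity $\abs{U_zT_fT_{\bar g}k_z(w)}=\abs{f(\vp_z(w))g(z)}$, whose $L^p(\Om;d\sigma)$ norm is precisely the quantity in the hypothesis --- no mapping property of $P$ is needed. (Alternatively, in your formulation one may observe that $f\circ\vp_z$ is already holomorphic and square-integrable, so $P$ fixes it.) With that repair the argument is complete and matches the paper's.
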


\begin{proof} We only need to check that $T_fT_{\bar{g}}$ satisfies the conditions of Theorem~\ref{RKT}. 

We will prove first that if $g\in \Bo$, then $T_{\bar{g}}k_z=\overline{g(z)}k_z$.  Let $g$ be a finite linear combination of reproducing kernels. In this case, $gK_w\in\Bo$ for any reproducing kernel $K_w$. Therefore,
\begin{equation*} 
T_{\bar{g}}k_z(w) = \int_{\Om} \ip{K_u}{K_w}\overline{g(u)}k_z(u)d\sigma(u)=\overline{\ip{k_z}{K_wg}}=\frac{\overline{K_w(z)g(z)}}{\norm{K_z}}=\overline{g(z)}k_z(w).
\end{equation*}
Next, let $g\in\Bo$ be arbitrary. Fix $z, w\in\Om$. Let $\epsilon>0$. There exists $h$ which is a finite linear combination of reproducing kernels such that $\norm{g-h}<\epsilon$. We then have

\begin{eqnarray*} 
\abs{T_{\bar{g}}k_z(w)-\overline{h(z)}k_z(w)}&=&\abs{T_{\overline{g-h}}k_z(w)} = \abs{\int_{\Om} \ip{K_u}{K_w}(\overline{g(u)-h(u)})k_z(u)d\sigma(u)}\\
&\leq& \int_{\Om} \abs{K_w(u)k_z(u)}^2d\sigma(u)\norm{g-h}^2 <C(z, w)\epsilon^2.
\end{eqnarray*}
Moreover, $\abs{g(z)-h(z)}\leq \norm{K_z}\norm{g-h}<\norm{K_z}\epsilon$.
Since $z, w$ were fixed and $\epsilon>0$ was arbitrary we obtain $T_{\bar{g}}k_z=\overline{g(z)}k_z$.

So we have 
\begin{eqnarray*}  \abs{U_zT_fT_{\bar{g}}k_z(w)} &=& \norm{K_w} \abs{\ip{U_zT_fT_{\bar{g}}k_z}{k_w}} \\
&=&\norm{K_w}\abs{\overline{g(z)}\ip{U_zT_fk_z}{k_w}}\\
&=&\norm{K_w}\abs{\overline{g(z)}\ip{T_fk_z}{k_{\vp_z(w)}}} \\
&=&\norm{K_w}\abs{\overline{g(z)}\ip{k_z}{T_{\bar{f}}k_{\vp_z(w)}}} \\
&=&\norm{K_w}\abs{f(\vp_z(w))\overline{g(z)}\ip{k_z}{k_{\vp_z(w)}}}. 
\end{eqnarray*}

Above we used the identity $|U_a^*k_b|=|k_{\vp_a(b)}|$ (see Lemma~\ref{identities}) several times. Therefore, $\abs{U_zT_fT_{\bar{g}}k_z(w)}=\abs{f(\vp_z(w))g(z)}$. Using our first assumption we obtain that the first condition in Theorem~\ref{RKT} is satisfied. The second condition is checked similarly.
 \end{proof}
 
 \subsection{RKT for Hankel operators} Next we treat the case of Hankel operators. We first need to define the class of Hankel operators in the setting of strong Bergman-type spaces.  
The Hankel operator $H_u:\Bo\to \Bo^{\perp}$ with symbol $u:\Om\to \C$ is defined by $H_uf=(I-P)uf$, where $P$ is the orthogonal projection of $L^2(\Om;d\sigma)$ onto $\Bo$. It is clear that some additional assumption on $u$ is needed for $H_u$ even to be defined on the whole $\Bo$. Here we provide a condition on $u$ under which $H_u$ is bounded on $\Bo$. Unfortunately, since $H_u$ is not an operator on $\Bo$ we cannot just apply Theorem~\ref{RKT}. Still, basically the same proof gives us the following result. 
 
\begin{cor} Let $\Bo$ be a strong Bergman-type space such that a product of any two reproducing kernels from $\Bo$ is still in $\Bo$.
If $H_{f}$ is a Hankel operator whose symbol $f$ satisfies $$\sup_{z\in\Om}\int_{\Om}\abs{f(\vp_z(w))-f(z)}^pd\sigma(w)<\infty,$$ for some $p>\frac{4-\kappa}{2-\kappa}$ then $H_{f}$ is bounded.
\end{cor} 
 
\begin{proof} The proof is basically the same as for Theorem~\ref{RKT}. Since the linear space of all the normalized reproducing kernels is dense in $\Bo$ it will be enough to show that there exists a constant $C<\infty$ such that $\norm{H_{f}g}\leq C\norm{g}$ for all $g$ that are in the linear span of the reproducing kernels of $\Bo$. Notice first that for any such $g$ we have:
$$H_{f}g(z)= f(z)g(z)-P(fg)(z)=\int_{\Om}(f(z)g(w)-f(w)g(w))\ip{K_w}{K_z}d\sigma(w).$$ Therefore, it is enough to show that the integral operator  
$$Rg(z)=\int_{\Om}\abs{f(z)-f(w)}\abs{\ip{K_w}{K_z}}g(w)d\sigma(w)$$ is bounded on $L^2(\Om;d\sigma)$. For this we again use the Schur's Test (Lemma~\ref{Schur}) with
\[R(z,w) = \abs{ \ip{K_z}{K_w} }\abs{f(z)-f(w)}, \hspace{0.15cm} h(z) \equiv \norm{K_z}^{\al/2}, \hspace{0.15cm}  X = \Om, \hspace{0.15cm} d\mu(z) =  d\nu(z) = d\sigma(z).\]
If $\kappa=0$ set $\al=\frac{4-2\kappa}{4-\kappa}=1$. If $\kappa>0$ choose $\alpha\in (\frac{2}{p}, \frac{4-2\kappa}{4-\kappa})$ such that $q(\alpha-\frac{2}{p})<\kappa$. The condition $p>\frac{4-\kappa}{2-\kappa}$ ensures that such $\alpha$ exists. 
Let $z\in\Om$ be arbitrary and fixed. We have

\begin{eqnarray*} \int_{\Om} R(z, w)\norm{\Kbw}^{\al} \,d\sigma(w) &=& \int_{\Om} \abs{ \ip{\Kbz}{\Kbw}}\abs{f(z)-f(w)}\norm{\Kbw}^{\al} \,d\sigma(w)\\
&=&  \norm{\Kbz}\int_{\Om} \abs{\ip{\kbz}{k_{\vp_z(u)}}}\abs{f(z)-f(\vp_z(u))}\norm{K_{\vp_z(u)}}^{\al-1}\,d\la(u) \\
&\simeq& \norm{\Kbz}\int_{\Om}\abs{\ip{k_{z}}{U^*_zk_{u}}}\abs{f(z)-f(\vp_z(u))}\abs{\ip{\kbz}{k_{u}}}^{1-\al} \,d\la(u) \\
&=& \norm{\Kbz}\int_{\Om}\abs{\ip{U_zk_z}{k_{u}}}\abs{f(z)-f(\vp_z(u))}\abs{\ip{\kbz}{k_{u}}}^{1-\al}\,d\la(u) \\
&=& \norm{\Kbz}^{\al} \int_{\Om}\abs{f(z)-f(\vp_z(u))}\frac{\abs{\ip{\Kbz}{K_{u}}}^{1-\al}}{\norm{K_u}^{2-\al}}\,d\la(u). 
\end{eqnarray*}
Using Holder's inequality we obtain that the last expression is no greater than
$$
\norm{\Kbz}^{\al}\left(\int_{\Om}\abs{f(z)-f(\vp_z(u))}^pd\sigma(u)\right)^{\frac{1}{p}}\left(\int_{\Om}\frac{\abs{\ip{\Kbz}{K_{u}}}^{q(1-\al)}}{\norm{K_u}^{q\left(2-\al-\frac{2}{p}\right)}}\,d\la(u)\right)^{\frac{1}{q}}.
$$
We can use Lemma~\ref{RF} to deduce that the second integral above is bounded by a constant independent of $z$.  Indeed, set $r=q\left(2-\al-\frac{2}{p}\right)$ and $s=r-2q(1-\al)$. By the choice of $p$ and $\al$ we have $r=\frac{p\left(2-\al-\frac{2}{p}\right)}{p-1}=2-\frac{p\al}{p-1}>\kappa$ and $0<s=q\left(\al-\frac{2}{p}\right)<\kappa$ if $\kappa>0$, and $r=s>\kappa$ if $\kappa=0$. Therefore, $r$ and $s$ satisfy all the conditions of \hyperref[A6]{A.6}. Thus
$$
\int_{\Om} R(z, w)\norm{\Kbw}^{\al} \,d\sigma(w)\leq C\left(\sup_{z\in\Om}\int_{\Om}\abs{f(\vp_z(w))-f(z)}^pd\sigma(w)\right)^{1/p} \norm{\Kbz}^{\al},
$$ where $C$ is the constant from \hyperref[A6]{A.6}.  We don't need to check the second condition since the kernel $R(z,w)$ is symmetric. 

\end{proof}

It is interesting to mention that in the classical Bergman space on the disc if $$\sup_{z\in\Om}\int_{\Om}\abs{f(\vp_z(w))-f(z)}^pd\sigma(w)<\infty,$$ holds for some $p\geq 1$ then it holds for all $p\geq 1$. If $p=2$ and $f$ is anti-holomorphic this is equivalent to $\sup_{z\in\Om}\int_{\Om}\abs{H_{f}k_z(w)}^2d\sigma(w)<\infty$. So, the strongest version of the RKT for boundedness holds in this setting. This was first noticed and proved by Axler in~\cite{Ax}. 
 
\section{Reproducing kernel thesis for compactness}\label{RKTComp}

Compact operators on a Hilbert space are exactly the ones which send a weakly convergent sequences into strongly convergent ones. The main goal of this section is to prove that operators that satisfy the conditions from Theorem~\ref{RKT} and send the weakly null $\{k_z\}$ (see Lemma~\ref{compact} below) into strongly null $\{Tk_z\}$ must be compact. More precisely, if an operator $T$ satisfies the conditions from Theorem~\ref{RKT} and $\lim_{d(z,0)\to\infty}\norm{Tk_z}=0$, then $T$ must be compact.

Recall that the essential norm of a bounded linear operator $S$ on $\Bo$ is given by 
$$
\norm{S}_e=\inf\left\{\norm{S-A}: A\textnormal{ is compact on } \Bo\right\}.
$$
We first show two simple results that will be used in the course of the proofs.

\begin{lm}\label{compact} The weak limit of $k_z$ is zero as $d(z,0)\to\infty$. 
\end{lm}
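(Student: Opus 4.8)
The plan is to use the standard criterion: a bounded net in a Hilbert space converges weakly to zero if and only if it is bounded in norm and its inner products against a dense (or total) subset of the space converge to zero. The vectors $k_z$ have norm $1$ by construction, so boundedness is automatic. Since the linear span of the reproducing kernels $\{K_w : w\in\Om\}$ is dense in $\Bo$, it suffices to show that $\ip{k_z}{K_w}\to 0$ as $d(z,0)\to\infty$, for each fixed $w\in\Om$.

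First I would write $\ip{k_z}{K_w} = \overline{k_z(w)}$ wait --- more directly, $\abs{\ip{k_z}{K_w}} = \norm{K_w}\,\abs{\ip{k_z}{k_w}}$, and then invoke property~\hyperref[A5]{A.5}, which gives
$$
\abs{\ip{k_z}{k_w}} \simeq \frac{1}{\norm{K_{\vp_z(w)}}}.
$$
Thus $\abs{\ip{k_z}{K_w}} \lesssim \norm{K_w}/\norm{K_{\vp_z(w)}}$, and since $w$ is fixed it remains to show that $\norm{K_{\vp_z(w)}}\to\infty$ as $d(z,0)\to\infty$. By~\hyperref[A7]{A.7} this will follow once we know that $d(\vp_z(w), 0)\to\infty$ as $d(z,0)\to\infty$. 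Using the quasi-invariance of $d$ from~\hyperref[A2]{A.2} together with $\vp_z(z) = 0$ (recall $\vp_z$ is an involution with $\vp_z(0)=z$), we get
$$
d(\vp_z(w), 0) = d(\vp_z(w), \vp_z(z)) \simeq d(w, z) \geq d(z,0) - d(w,0),
$$
by the triangle inequality, and the right-hand side tends to $\infty$ since $d(w,0)$ is fixed. Hence $d(\vp_z(w),0)\to\infty$, so $\norm{K_{\vp_z(w)}}\to\infty$ by~\hyperref[A7]{A.7}, and therefore $\abs{\ip{k_z}{K_w}}\to 0$.

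Finally, to pass from convergence against the reproducing kernels to full weak convergence: given arbitrary $f\in\Bo$ and $\varepsilon>0$, approximate $f$ within $\varepsilon$ (in $\Bo$-norm) by a finite linear combination $h$ of reproducing kernels; then $\abs{\ip{k_z}{f}} \leq \abs{\ip{k_z}{h}} + \norm{k_z}\norm{f-h} < \abs{\ip{k_z}{h}} + \varepsilon$, and the first term is a finite sum of quantities each going to $0$. I don't anticipate a genuine obstacle here; the only point requiring a little care is the reduction $d(z,0)\to\infty \implies d(\vp_z(w),0)\to\infty$, which as shown above is immediate from the quasi-invariance of $d$ and the triangle inequality.
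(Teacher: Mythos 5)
Your proposal is correct and follows essentially the same route as the paper: use the quasi-invariance of $d$ from \hyperref[A2]{A.2} to see that $d(\vp_z(w),0)\to\infty$ as $d(z,0)\to\infty$, combine \hyperref[A5]{A.5} and \hyperref[A7]{A.7} to conclude $\ip{k_z}{k_w}\to 0$, and finish by density of the span of the reproducing kernels. The only difference is that you spell out the involution identity $\vp_z(z)=0$ and the triangle inequality explicitly, which the paper leaves implicit.
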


\begin{proof} Note first that property \hyperref[A2]{A.2} implies that if $d(z,0)\to\infty$ then $d(\vp_w(z),0)\to \infty$. Properties \hyperref[A5]{A.5} and \hyperref[A7]{A.7} now immediately imply that $\ip{k_w}{k_z}\to 0$ as $d(z,0)\to\infty$. The fact that the normalized reproducing kernels are dense in $\Bo$ then implies $k_z$ converges weakly to $0$ as $d(z,0)\to\infty$. 
\end{proof}

\begin{lm} 
\label{lm:Compactsaregood}
For any compact operator $A$ and any $f\in\Bo$ we have that $\norm{A^zf}\to 0$ as $d(z,0)\to\infty$.
\end{lm}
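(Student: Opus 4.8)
The plan is to exploit the relation $A^z=U_zAU_z^*$ together with the compactness of $A$, reducing the statement to a weak null-convergence property of $U_z^*f$, which in turn follows from a short computation with the reproducing kernels.

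\textbf{Step 1: reduce to $U_z^*f\rightharpoonup 0$.} Since $\norm{U_zh}\simeq\norm{h}$ and $\norm{U_z^*h}\simeq\norm{h}$ uniformly in $z$ (as established in the subsection on translation operators), applying the first relation with $h=AU_z^*f$ gives $\norm{A^zf}\simeq\norm{AU_z^*f}$, and the second relation shows that the family $\{U_z^*f:z\in\Om\}$ is bounded in $\Bo$. Hence it suffices to prove that $U_z^*f$ converges weakly to $0$ as $d(z,0)\to\infty$: compactness of $A$ then upgrades this to norm convergence $\norm{AU_{z_n}^*f}\to 0$ along every sequence with $d(z_n,0)\to\infty$, which is exactly $\norm{A^zf}\to 0$.

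\textbf{Step 2: density reduction and the kernel estimate.} Because $\{U_z^*f\}$ is bounded and $\operatorname{span}\{K_w\}$ is dense in $\Bo$, it is enough to show $\ip{U_z^*f}{K_w}\to 0$ for each fixed $w$. Writing $\ip{U_z^*f}{K_w}=\ip{f}{U_zK_w}$ and approximating $f$ in norm by finite linear combinations of reproducing kernels (the error being controlled since $\norm{U_zK_w}\simeq\norm{K_w}$ is bounded), the whole matter reduces to showing $\ip{K_v}{U_zK_w}=\overline{(U_zK_w)(v)}=\overline{K_w(\varphi_z(v))\,k_z(v)}\to 0$ for each fixed pair $v,w$. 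By the reproducing property and \hyperref[A5]{A.5}, $|k_z(v)|=\norm{K_v}\,|\ip{k_z}{k_v}|\simeq \norm{K_v}/\norm{K_{\varphi_z(v)}}$ and $|K_w(\varphi_z(v))|=\norm{K_w}\,\norm{K_{\varphi_z(v)}}\,|\ip{k_w}{k_{\varphi_z(v)}}|\simeq \norm{K_w}\,\norm{K_{\varphi_z(v)}}/\norm{K_{\varphi_w(\varphi_z(v))}}$, so $|(U_zK_w)(v)|\simeq \norm{K_v}\,\norm{K_w}/\norm{K_{\varphi_w(\varphi_z(v))}}$, and by \hyperref[A7]{A.7} it remains to check that $d(\varphi_w(\varphi_z(v)),0)\to\infty$.

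\textbf{Step 3: the geometric propagation.} Two applications of \hyperref[A2]{A.2} handle this. First, $d(\varphi_z(v),z)=d(\varphi_z(v),\varphi_z(0))\lesssim d(v,0)$, so the triangle inequality gives $d(\varphi_z(v),0)\geq d(z,0)-d(z,\varphi_z(v))\to\infty$. Second, since $\varphi_w$ is an involution with $\varphi_w(w)=0$, one has $d(\varphi_w(a),0)=d(\varphi_w(a),\varphi_w(w))\simeq d(a,w)\geq d(a,0)-d(0,w)\to\infty$ whenever $d(a,0)\to\infty$; applying this with $a=\varphi_z(v)$ completes Step 2, hence the proof. The only point requiring any care is this geometric propagation of $d(z,0)\to\infty$ through both $\varphi_z$ and then $\varphi_w$, but it is essentially the estimate already used in the proof of Lemma~\ref{compact} with one extra automorphism inserted. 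Note also that we never need an explicit formula for $U_z^*$ (which agrees with $U_z$ only in the strong case): routing everything through $\ip{U_z^*f}{K_w}=\ip{f}{U_zK_w}$ and the known pointwise action of $U_z$ avoids that issue entirely.
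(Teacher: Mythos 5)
Your proof is correct and follows essentially the same route as the paper's: reduce by density to the reproducing kernels, use \hyperref[A2]{A.2}, \hyperref[A5]{A.5} and \hyperref[A7]{A.7} to show the translated kernels (equivalently $U_z^*f$) tend weakly to zero as $d(z,0)\to\infty$, and let compactness of $A$ upgrade this to norm convergence. The only difference is organizational — the paper applies part (c) of the translation lemma to write $\norm{A^zk_w}\simeq\norm{Ak_{\varphi_z(w)}}$ and then quotes Lemma~\ref{compact}, whereas you verify the weak nullity of $U_z^*f$ directly by testing against kernels; the content is the same.
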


\begin{proof} If $f=k_w$ then using the previous lemma we obtain that $\norm{A^zk_w}\simeq \norm{U_zAk_{\vp_z(w)}}\to 0$ as $d(z,0)\to\infty$. For the general case, choose $f\in\Bo$ arbitrary of norm $1$.  We can approximate $f$ by linear combinations of normalized reproducing kernels and in a standard way we can deduce the same result. 
\end{proof}

\begin{thm}\label{RKTC} Let $T:\Bo\to\Bo$ be a linear operator and $\kappa$ be the constant from \hyperref[A6]{A.6}. If   
\begin{equation}\label{e1} \sup_{z\in\Om}\norm{U_zTk_z}_{L^p(\Om;d\sigma)}<\infty \hspace{.3cm} \text{and} \hspace{.3cm}  \sup_{z\in\Om}\norm{U_zT^*k_z}_{L^p(\Om;d\sigma)}<\infty,
\end{equation} for some $p>\frac{4-\kappa}{2-\kappa}$, then
\begin{itemize}
\item[(a)] $ \|T\|_e\simeq \sup_{\norm{f} \leq 1}\limsup_{d(z,0)\to\infty} \norm{T^zf}.$
\item[(b)] If $\lim_{d(z,0)\to\infty}\norm{Tk_z}=0$ then $T$ must be compact.
\end{itemize}
\end{thm}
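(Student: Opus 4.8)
The plan is to follow the classical localization strategy of Axler--Zheng and Su\'arez, adapted to the Bergman-type setting, with the Rudin--Forelli estimates \hyperref[A6]{A.6} and the covering result (Proposition~\ref{Covering}) doing the geometric heavy lifting. For part (a), the lower bound $\|T\|_e \gtrsim \sup_{\|f\|\le 1}\limsup_{d(z,0)\to\infty}\|T^z f\|$ is the easy direction: for any compact $A$ one has $\|T^z f\| = \|U_z T U_z^* f\| \le \|U_z(T-A)U_z^* f\| + \|A^z f\|$, and by Lemma~\ref{lm:Compactsaregood} the term $\|A^z f\| \to 0$ as $d(z,0)\to\infty$, while $\|U_z\|,\|U_z^*\|\simeq 1$ on $L^2$; taking $\limsup$ and then infimum over compact $A$ gives the bound. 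The upper bound is the substantial part: given $T$ satisfying \eqref{e1}, I would fix a large radius $r$, use the covering $\FF_r = \{F_j\}$ from Proposition~\ref{Covering}, and decompose $T$ (modulo compacts) by inserting the multiplication operators $M_{1_{F_j}}$, writing $T \approx \sum_j M_{1_{G_j}} T M_{1_{F_j}}$ up to an error controlled by the off-diagonal decay coming from \hyperref[A6]{A.6}. The finite-overlap property (1) of the covering converts the almost-orthogonality of the pieces into a bound of the form $\|T\|_e \lesssim \sup_j \|M_{1_{G_j}} T M_{1_{F_j}}\|$ (plus a vanishing term), and since $F_j \subset D(x_j, 2r)$ one can conjugate by $U_{x_j}$ to recenter: $\|M_{1_{G_j}} T M_{1_{F_j}}\|$ is comparable to the norm of $T^{x_j}$ restricted to functions supported near the origin, which is $\lesssim \sup_{\|f\|\le 1}\sup_{d(z,0) \text{ large}} \|T^z f\|$ once $r$ (hence the $x_j$ with $d(x_j,0)$ bounded) is handled by a separate compactness argument via Lemma~\ref{ToeplitzCompact}.

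More concretely, the key steps in order are: \textbf{(i)} Establish the Schur-type bound from Theorem~\ref{RKT} in a \emph{localized} form, showing that the integral operator with kernel $\abs{\ip{T^*K_z}{K_w}}$ restricted to $z\in \Om\setminus D(0,R)$ has $L^2$-norm controlled by $\sup_{d(z,0)>R'}\|U_z T^* k_z\|_{L^p}$ for appropriate $R'$; this uses exactly the Schur test computation already carried out, with the supremum over $z$ now taken only over the far region. \textbf{(ii)} Show that $T = T_{1_{D(0,R)}} T + (\text{operator essentially supported far out})$, where the first summand is compact by Lemma~\ref{ToeplitzCompact}-type reasoning (it factors through multiplication by the indicator of a bounded set). \textbf{(iii)} For the far-out part, use the covering $\FF_r$ and the finite-overlap constant $N$ to bound its essential norm by $N \cdot \sup_j \|U_{x_j} T U_{x_j}^* \cdot (\text{localized to } D(0,Cr))\|$, and let $R\to\infty$ so only $x_j$ with $d(x_j,0)$ large survive. \textbf{(iv)} Finally relate $\|T^{x_j} P_{\text{near }0}\|$ back to $\sup_{\|f\|\le 1}\limsup_{d(z,0)\to\infty}\|T^z f\|$ by noting that $T^{x_j}$ acting on a fixed-radius ball of functions is, up to the uniformly bounded factors $\|U_{x_j}^{\pm 1}\|$, exactly this quantity.

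For part (b), the hypothesis $\lim_{d(z,0)\to\infty}\|Tk_z\|=0$ is converted into $\sup_{\|f\|\le 1}\limsup_{d(z,0)\to\infty}\|T^z f\| = 0$ as follows: first, $\|T^z k_w\| = \|U_z T U_z^* k_w\| \simeq \|U_z T k_{\vp_z(w)}\| \le \|U_z T k_{\vp_z(w)}\|_{L^2}$, and one shows this tends to $0$ as $d(z,0)\to\infty$ for each fixed $w$ using $d(\vp_z(w),0)\to\infty$ (from \hyperref[A2]{A.2}) together with $\|Tk_u\|\to 0$; the transition from $\|U_z T k_u\|_{L^2}$ to the hypothesis $\|Tk_u\|_{L^2}\to 0$ requires that $\|U_z\|_{L^2}\simeq 1$, which is exactly part (a) of the translation lemma. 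Then one upgrades from reproducing kernels $k_w$ to arbitrary $f$ of norm $1$ by density, using the uniform bound $\sup_z \|T^z\|_{L^2\to L^2} < \infty$ (a consequence of part (a) applied with the uniform Schur bound, or of Theorem~\ref{RKT} applied to each $T^z$) to control the approximation error uniformly in $z$. With $\sup_{\|f\|\le 1}\limsup_{d(z,0)\to\infty}\|T^z f\| = 0$, part (a) forces $\|T\|_e = 0$, i.e.\ $T$ is compact.

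\textbf{Main obstacle.} The hard part will be step (iii): making rigorous the passage from $T$ to $\sum_j M_{1_{G_j}} T M_{1_{F_j}}$ modulo compact (and small) errors, and proving that the off-diagonal terms $M_{1_{G_i}} T M_{1_{F_j}}$ with $F_i, F_j$ far apart are negligible. This is where one must exploit \hyperref[A6]{A.6} quantitatively — the kernel $\abs{\ip{K_z}{K_w}}$ decays (after normalization) like an integrable power, and one needs a Schur-test argument \emph{with a decay gain} that shrinks as the separation between the supports grows, so that summing over the covering (using finite overlap $N$ and the doubling of $\la$) yields a bound that goes to $0$ when one pushes the bounded piece $D(0,R)$ out to infinity. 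Controlling the dependence on $r$ and $R$ simultaneously, so that the error terms vanish in the right order, is the delicate bookkeeping at the heart of the proof; the translation invariance built into \hyperref[A1]{A.1}--\hyperref[A5]{A.5} is what keeps all the constants uniform.
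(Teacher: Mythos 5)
Your skeleton matches the paper's: the easy inequality via Lemma~\ref{lm:Compactsaregood}, then a covering-based localization $T\approx\sum_j M_{1_{F_j}}TPM_{1_{G_j}}$ whose error is controlled by a Schur test with the kernel $\abs{\ip{T^*K_z}{K_w}}$ cut off to $d(z,w)>r$ (this is exactly Proposition~\ref{MainEst1}), compactness of the finite partial sums via Lemma~\ref{ToeplitzCompact}, and a tail estimate using the finite-overlap constant $N$. Your part (b) (density argument reducing to $\norm{T^zk_w}\simeq\norm{Tk_{\vp_z(w)}}\to0$, with $\sup_z\norm{T^z}<\infty$ from Theorem~\ref{RKT} controlling the approximation error) is also the paper's argument.

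However, there is a genuine gap in your step (iv), and you have mislocated the main difficulty. After the finite-overlap bound, one is left with $\sup_{j\geq m}\norm{Tl_j}$ where $l_j=PM_{1_{G_j}}f_j/\norm{M_{1_{G_j}}f_j}$ are unit vectors that \emph{vary with} $j$. Writing $l_j=U_{z_j}^*h_j$ with $z_j\in G_j$ recenters them, but the resulting bound is of the form $\limsup_j\norm{T^{z_j}h_j}$ with a $j$-dependent test function, which a priori only controls $\limsup_{d(z,0)\to\infty}\sup_{\norm{h}\leq1}\norm{T^zh}$ --- the quantifiers in the wrong order. The quantity in statement (a) is the smaller one, $\sup_{\norm{f}\leq1}\limsup_{d(z,0)\to\infty}\norm{T^zf}$, with the supremum over a \emph{single fixed} $f$ taken outside the limsup; and for part (b) only this smaller quantity can be shown to vanish from $\norm{Tk_z}\to0$. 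Your phrase ``$T^{x_j}$ acting on a fixed-radius ball of functions is \ldots\ exactly this quantity'' elides precisely this point. The paper's resolution is a compactness argument you do not supply: the $h_j$ are represented as integrals $\int_{\vp_{z_j}(G_j)}a_j(\vp_{z_j}(w))k_w\,d\la(\vp_{z_j}(w))$ against measures of uniformly bounded total variation supported in a fixed ball $\overline{D(0,\rho)}$; extracting a weak-$*$ convergent subsequence yields one limit function $h\in\Bo$ with $h_j\to h$ in norm, so that $\norm{Tl_j}\lesssim\norm{T^{z_j}h}+o(1)$ for this single $h$, which is dominated by the correct supremum. Without this (or an equivalent normal-families step), the upper bound in (a) and hence part (b) do not follow. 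By contrast, the off-diagonal decay you flag as the ``main obstacle'' is a near-verbatim rerun of the Schur-test computation from Theorem~\ref{RKT} with the integration domain restricted to $D(0,r)^c$, plus one extra H\"older step using finiteness of $\sigma$; it requires no new idea.
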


In the case of the classical Bergman space, part (b) of our result corresponds to the main result in \cite{MZ}*{Theorem 1.2}. In the case of the unweighted Bargmann-Fock space, statement (b) corresponds to \cite{CWZ}*{Theorem 5}.

The following localization property will be a crucial step towards estimating the essential norm. A version of this result in the classical Bergman space setting was first proved by Su\'arez in~\cite{Sua}. Related results were later given in~\cites{MW,MSW,BI}.

\begin{prop}\label{MainEst1} Let $T:\Bo\to\Bo$ be a linear operator and $\kappa$ be the constant from \hyperref[A6]{A.6}. If  
\begin{equation} \sup_{z\in\Om}\norm{U_zTk_z}_{L^p(\Om;d\sigma)}<\infty \hspace{.3cm} \text{and} \hspace{.3cm}  \sup_{z\in\Om}\norm{U_zT^*k_z}_{L^p(\Om;d\sigma)}<\infty
\end{equation} for some $p>\frac{4-\kappa}{2-\kappa}$, then for every $\epsilon > 0$ there exists $r>0$ such that for the covering $\FF_r=\{F_j\}$ (associated to $r$) from Proposition~\ref{Covering}  
\begin{eqnarray*} \norm{ T-\sum_{j}M_{1_{F_j} }TPM_{1_{G_j} }}_{\BL} < \epsilon. \end{eqnarray*}
\end{prop}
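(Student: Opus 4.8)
The plan is to reduce the operator-norm estimate to a sum of the kind of integral estimates already established in the proof of Theorem~\ref{RKT}, using Proposition~\ref{Covering} to control overlaps. First I would fix $\epsilon>0$ and, for a radius $r$ to be chosen, write $S_r:=\sum_j M_{1_{F_j}}TPM_{1_{G_j}}$. Since the $F_j$ are disjoint and cover $\Om$, for any $g\in\Bo$ we have $(T-S_r)g=\sum_j M_{1_{F_j}}\bigl(Tg-TPM_{1_{G_j}}g\bigr)=\sum_j M_{1_{F_j}}TP\bigl(M_{1_{G_j^c}}g\bigr)$ on $F_j$, using $g=Pg$ and $P=PM_{1_{G_j}}+PM_{1_{G_j^c}}$. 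Because the $F_j$ are disjoint, $\norm{(T-S_r)g}^2=\sum_j\norm{1_{F_j}\cdot TP(1_{G_j^c}g)}^2$, so it suffices to bound $\sum_j\norm{1_{F_j}TP(1_{G_j^c}g)}^2$ by $\epsilon^2\norm{g}^2$. The key geometric point is that on $F_j$ the kernel of $TP$ is being tested against a function supported \emph{outside} $G_j=\{d(\cdot,F_j)\le r\}$, so every kernel value that enters carries a factor $\abs{\ip{K_z}{K_w}}$ with $d(z,w)\gtrsim r$; by \hyperref[A5]{A.5} and \hyperref[A7]{A.7} this quantity can be made uniformly small once $r$ is large.

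Next I would run the Schur-test machinery from Theorem~\ref{RKT}, but with the integration kernel $R(z,w)=\abs{\ip{T^*K_z}{K_w}}\,1_{d(z,w)>r}$ (the restriction coming from $z\in F_j$, $w\in G_j^c$, which forces $d(z,w)>r$). With the same choice $h(z)=\norm{K_z}^{\al/2}$ and the same $\al$, $p$, $q$ as in that proof, the two Schur integrals become
\begin{equation*}
\int_{d(z,w)>r} R(z,w)\norm{K_w}^\al\,d\sigma(w)\quad\text{and}\quad\int_{d(z,w)>r} R(z,w)\norm{K_z}^\al\,d\sigma(z),
\end{equation*}
which, after the same change of variables, Hölder step, and application of Lemma~\ref{RF} and \hyperref[A6]{A.6}, are bounded by $C\,\eta(r)^{1/q}\sup_z\norm{U_zT^*k_z}_{L^p(\sigma)}\,\norm{K_z}^\al$ (resp. with $T$), where $\eta(r):=\sup_z\int_{d(z,u)>r}\dfrac{\abs{\ip{K_z}{K_u}}^{q(1-\al)}}{\norm{K_u}^{q(2-\al-2/p)}}\,d\la(u)\to 0$ as $r\to\infty$ by dominated convergence against the finite bound from \hyperref[A6]{A.6} (the pointwise integrand tends to $0$ on $\{d(z,u)>r\}$ via \hyperref[A5]{A.5} and \hyperref[A7]{A.7}, after a translation to $z=0$). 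Hence the integral operator $R_r$ with this truncated kernel satisfies $\norm{R_r}_{L^2(\sigma)\to L^2(\sigma)}\le C\,\eta(r)^{1/q}$, which we make $<\epsilon$ by choosing $r$ large.

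Finally I would patch together the estimate: for $g$ in the linear span of reproducing kernels (dense in $\Bo$), the chain of inequalities used in Theorem~\ref{RKT}'s proof, now with each $1_{G_j^c}$ inserted, gives
\begin{equation*}
\sum_j\norm{1_{F_j}TP(1_{G_j^c}g)}^2\le\norm{R_r}^2_{L^2(\sigma)\to L^2(\sigma)}\cdot C\sum_j\norm{1_{G_j}g}^2\le C\,N\,\norm{R_r}^2\,\norm{g}^2,
\end{equation*}
where the factor $N$ comes from the finite-overlap property (\hyperref[Finite]{1}) of Proposition~\ref{Covering} applied to $\{G_j\}$, and the intermediate step uses that after the change of variables the mass of $g$ near $F_j$ is what controls $1_{F_j}TP(\cdot)$. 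Choosing $r$ so that $C\,N\,\norm{R_r}^2<\epsilon^2$ finishes the proof. The main obstacle I expect is bookkeeping the localization correctly: making sure that the Schur test is run with the \emph{truncated} kernel (so that the smallness in $r$ is genuinely extracted), and that the sum over $j$ is reassembled with only a bounded-overlap loss rather than an unbounded one — i.e. correctly trading the disjointness of $\{F_j\}$ on the output side against the overlap of $\{G_j\}$ on the input side.
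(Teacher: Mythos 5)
Your overall skeleton matches the paper's: the identity $T-\sum_j M_{1_{F_j}}TPM_{1_{G_j}}=\sum_j M_{1_{F_j}}TPM_{1_{G_j^c}}$, followed by a Schur test on the kernel $\sum_j 1_{F_j}(z)1_{G_j^c}(w)\abs{\ip{T^*K_z}{K_w}}$ (which, as you note, is supported where $d(z,w)>r$), is exactly how the paper proceeds. But the mechanism by which you extract smallness in $r$ has a genuine gap. You keep the same Hölder exponents $p,q$ as in Theorem~\ref{RKT} and claim that all the decay comes from the second factor, i.e.\ that $\eta(r)=\sup_z\int_{d(z,u)>r}\abs{\ip{K_z}{K_u}}^{q(1-\al)}\norm{K_u}^{-q(2-\al-2/p)}\,d\la(u)\to 0$ ``by dominated convergence.'' Dominated convergence gives the vanishing of the tail for each \emph{fixed} $z$ (the full integral converges), but the supremum over $z$ is the whole point: both the integrand and the truncation set depend on $z$, the translation $u=\vp_z(v)$ does not remove the $z$-dependence from the integrand (cf.\ Lemma~\ref{RF}), and axiom \hyperref[A6]{A.6} only asserts uniform boundedness of the full integral, with no quantitative tail decay. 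One can salvage a uniform tail estimate by peeling off a factor $\abs{\ip{k_z}{k_u}}^{\epsilon}$, which is uniformly small on $\{d(z,u)>r\}$ by \hyperref[A5]{A.5} and \hyperref[A7]{A.7}, and re-checking that the shifted exponents still satisfy \hyperref[A6]{A.6} --- but that argument is not in your write-up and needs the slack in the exponents to be verified.

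The paper avoids this entirely by putting the truncation on the \emph{other} Hölder factor: it introduces an auxiliary exponent $p_0$ with $\frac{2}{p}<\frac{2}{p_0}<\frac{4-2\kappa}{4-\kappa}$, runs Hölder with $(p_0,q_0)$, bounds the full (untruncated) Rudin--Forelli integral by \hyperref[A6]{A.6}, and is left with $\bigl(\int_{D(0,r)^c}\abs{T^{*z}k_0}^{p_0}\,d\sigma\bigr)^{1/p_0}$; since $\abs{T^{*z}k_0}\simeq\abs{U_zT^*k_z}$ is uniformly bounded in $L^p$ with $p>p_0$ and $\sigma$ is a finite measure, one more application of Hölder shows this tail is $o(1)$ as $r\to\infty$ uniformly in $z$. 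You should adopt that route (or supply the missing uniform tail estimate). A secondary point: your final reassembly with the factor $N$ and the bound by $\sum_j\norm{1_{G_j}g}^2$ is not needed and, as written, does not work term by term (summing $\norm{R_r}^2\norm{1_{G_j^c}g}^2$ over $j$ diverges); the Schur test applied once to the summed kernel already gives the $L^2\to L^2$ bound, because the disjointness of $\{F_j\}$ means only one $j$ contributes for each $z$, and the second Schur condition is checked by observing that the $z$'s contributing for a fixed $w$ lie in $D(w,r)^c$. The overlap constant $N$ only enters later, in the proof of Theorem~\ref{RKTC}.
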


\begin{proof}  Again we will use Schur's Test (Lemma~\ref{Schur}) with
\[R(z,w) = \sum_j 1_{F_j}(z)1_{G^c_j}(w)\abs{ \ip{T^*\Kbz}{\Kbw} }, \hspace{0.15cm} h(z) \equiv \norm{\Kbz}^{\al/2}, \hspace{0.15cm}  X = \Om, \hspace{0.15cm} d\mu(z) =  d\nu(z) = d\sigma(z).\] If $\kappa=0$ then set $\alpha=\frac{4-2\kappa}{4-\kappa}=1$. If $\kappa>0$ first choose $p_0$ such that $\frac{2}{p}<\frac{2}{p_0}<\frac{4-2\kappa}{4-\kappa}$ and denote by $q_0$ the conjugate of $p_0$, $q_0=\frac{p_0}{p_0-1}$. Then choose $\alpha\in (\frac{2}{p_0}, \frac{4-2\kappa}{4-\kappa})$ such that $q_0(\alpha-\frac{2}{p_0})<\kappa$. The condition $p>\frac{4-\kappa}{2-\kappa}$ ensures that such $p_0$ and $\alpha$ exist. 
Let $z\in\Om$ be arbitrary and fixed. Since $\{F_j\}$ forms a covering for $\Om$ there exists a unique $j$ such that $z\in F_j$. Now, we have

\begin{eqnarray*} \int_{\Om} R(z, w)\norm{\Kbw}^{\al} \,d\sigma(w) &=& \int_{\Om}  1_{F_j}(z)1_{G^c_j}(w)\abs{ \ip{T^*\Kbz}{\Kbw}}\norm{\Kbw}^{\al} \,d\sigma(w)\\
&=& \int_{G_j^c}  1_{F_j}(z)\abs{ \ip{T^*\Kbz}{\Kbw}}\norm{\Kbw}^{\al} \,d\sigma(w)\\
&\leq& \int_{D(z, r)^c} \abs{ \ip{T^*\Kbz}{\Kbw}}\norm{\Kbw}^{\al} \,d\sigma(w)\\
&=&  \norm{\Kbz}\int_{D(0,r)^c} \abs{\ip{T^*\kbz}{k_{\vf_z(u)}}}\norm{K_{\vf_z(u)}}^{\al-1}\,d\la(u) \\
&\simeq& \norm{\Kbz}\int_{D(0,r)^c}\abs{\ip{T^*U^*_zk_{0}}{U^*_zk_{u}}}\abs{\ip{\kbz}{k_{u}}}^{1-\al} \,d\la(u) \\
&=& \norm{\Kbz}\int_{D(0,r)^c}\abs{\ip{T^{*z}k_{0}}{k_{u}}}\abs{\ip{\kbz}{k_{u}}}^{1-\al}\,d\la(u) \\
&=& \norm{\Kbz}^{\al} \int_{D(0,r)^c}\frac{\abs{T^{*z}k_{0}(u)}\abs{\ip{\Kbz}{K_{u}}}^{1-\al}}{\norm{K_u}^{2-\al}}\,d\la(u). 
\end{eqnarray*}
Using Holder's inequality we obtain that the last expression is no greater than
$$
\norm{\Kbz}^{\al}\left(\int_{D(0,r)^c}\abs{T^{*z}k_0(u)}^{p_0}\,d\sigma(u)\right)^{\frac{1}{p_0}}\left(\int_{\Om}\frac{\abs{\ip{\Kbz}{K_{u}}}^{q_0(1-\al)}}{\norm{K_u}^{q_0\left(2-\al-\frac{2}{p_0}\right)}}\,d\la(u)\right)^{\frac{1}{q_0}}.
$$
We can use Lemma~\ref{RF} to deduce that the second integral above is bounded by a constant independent of $z$. Indeed, set $r=q_0\left(2-\al-\frac{2}{p_0}\right)$ and $s=r-2q_0(1-\al)$. By the choice of $p_0$ and $\al$ we have $r=\frac{p_0\left(2-\al-\frac{2}{p_0}\right)}{p_0-1}=2-\frac{p_0\al}{p_0-1}>\kappa$, and $0<s=q_0\left(\al-\frac{2}{p_0}\right)<\kappa$. if $\kappa=0$ then $r=s=q_0\left(\al-\frac{2}{p_0}\right)>0$. Therefore, $r$ and $s$ satisfy all the conditions of Lemma~\ref{RF}.  

Using that (by Lemma~\ref{identities}) $$\abs{T^{*z}k_0}\simeq\abs{U_zT^*k_z} \in L^p(\Om;d\sigma),$$ and that $\sigma$ is a finite measure, one more application of Holder's inequality on the first integral above gives that this integral is also bounded by a constant (independent of $z$) that goes to $0$ as $r$ approaches $\infty$.
Thus, the first condition of Lemma~\ref{Schur} is satisfied with a constant $o(1)$ as $r\to\infty$. 

Next, we check the second condition. Fix $w\in\Om$. Let $J$ be a subset of all indices  $j$ such that $w\notin G_j$. Then $\cup_{j\in J}F_j\subset D(w,r)$ and consequently 
\begin{eqnarray*}
 \int_{\Om} R(z, w)\norm{\Kbz}^{\al}\,d\sigma(z) &=& \int_{\cup_{j\in J}F_j} \abs{ \ip{T\Kbw}{\Kbz} }\norm{\Kbz}^{\al}\,d\sigma(z)\\ &\leq&  \int_{D(w,r)^c} \abs{ \ip{T\Kbw}{\Kbz} }\norm{\Kbz}^{\al}d\sigma(z). 
 \end{eqnarray*}
Using exactly the same estimates as above, but interchanging the roles of $T$ and $T^*$ we obtain that the last expression is bounded by  $C(r)\norm{\Kbw}^{\al}$, with $C(r)=o(1)$ as $r\to \infty$. 

Let $f\in \Bbt$ with  $\norm{f} \leq 1$. Applying Schur's Test we obtain \begin{eqnarray*} \norm{ \sum_{j}M_{1_{F_j} }TPM_{1_{G_j^c} }f}^2 &=& \norm{\int_{\Om}R(z,w)f(w) d\sigma(w)}^2 \\
&=& o(1) \hspace{0.5cm} \text{ as } \hspace{0.2cm} r \rightarrow \infty.
\end{eqnarray*}
This proves the Proposition for $\kappa>0$. The case $\kappa=0$ can be proved with adaptations of the proof above analogous to the ones in the proof of Theorem~\ref{RKT}.
\end{proof}

\begin{proof}[Proof of Theorem~\ref{RKTC}] 
First, notice that one of these inequalities is fairly easy to deduce. Indeed, using the triangle inequality and the fact that $\lim_{d(z,0)\to\infty}\norm{A^zf}=0$ for every compact operator $A$ (Lemma \ref{lm:Compactsaregood}) we obtain that  
$$
\sup_{\norm{f} \leq 1}\limsup_{d(z,0)\to\infty} \norm{T^zf}\leq \sup_{\norm{f} \leq 1}\limsup_{d(z,0)\to\infty} \norm{(T-A)^zf}\lesssim \norm{T-A}
$$ 
for any compact operator $A$. Now, since $A$ is arbitrary this immediately implies 
$$
\sup_{\norm{f} \leq 1}\limsup_{d(z,0)\to\infty} \norm{T^zf}\lesssim \|T\|_e.
$$

The other side requires more work. The crucial ingredient will be Proposition~\ref{MainEst1}. It is easy to see that the essential norm of $T$ as an operator in $\LL(\Bo)$ is quasi-equal to the essential norm of $T$ as an operator in $\BL$. Therefore, it is enough to estimate the essential norm of $T$ as an operator on $\BL$. Let $\epsilon>0$. By Proposition~\ref{MainEst1} there exists $r>0$ such that for the covering $\FF_r=\{F_j\}$ associated to $r$ 
\begin{eqnarray*} \norm{T- \sum_{j}M_{1_{F_j} }TPM_{1_{G_j} }}_{\BL} < \epsilon. \end{eqnarray*} 

Note that by Lemma~\ref{ToeplitzCompact} the Toeplitz operators $PM_{1_{G_j}}$ are compact. Therefore the finite sum $\sum_{j\leq m}M_{1_{F_j} }TPM_{1_{G_j} }$ is compact for every $m\in \N$. So, it is enough to show that  
$$
\limsup_{m\to\infty}\norm{T_m}_{\BL}\lesssim \sup_{\norm{f} \leq 1}\limsup_{d(z,0)\to \infty} \norm{T^zf}_{\Bbt},
$$
where 
$$
T_m=  \sum_{j\geq m}M_{1_{F_j} }TPM_{1_{G_j} }.
$$
Indeed,
\begin{eqnarray*}
\norm{TP}_e&\leq& \norm{TP- \sum_{j\leq m}M_{1_{F_j} }TPM_{1_{G_j} }}_{\BL}\\
&\leq& \norm{TP- \sum_{j}M_{1_{F_j} }TPM_{1_{G_j} }}_{\BL}+\norm{T_m}\leq \epsilon+\norm{T_m}.
\end{eqnarray*}
Let $f\in\Bo$ be arbitrary of norm no greater than $1$.  Then,

\begin{eqnarray*} 
\norm{T_m f}^2 &=& \sum_{j\geq m}\norm{M_{1_{F_j} }TPM_{1_{G_j} }f}^2\\
&=& \sum_{j\geq m} \frac{\norm{M_{1_{F_j} }TPM_{1_{G_j} }f}^2}{\norm{M_{1_{G_j} }f}^2}\norm{M_{1_{G_j} }f}^2
\leq N\sup_{j\geq m}\norm{M_{1_{F_j} }Tl_j}^2\lesssim\sup_{j\geq m} \norm{Tl_j}^2,
\end{eqnarray*}
where 
$$l_j:=\frac{PM_{1_{G_j} }f}{\norm{M_{1_{G_j} }f}}.$$

Therefore, 
$$\norm{T_m}\lesssim \sup_{j\geq m}\sup_{\norm{f}=1}\left\{\norm{Tl_j}: l_j=\frac{PM_{1_{G_j} }f}{\norm{M_{1_{G_j} }f}}\right\},$$ and hence
$$ \limsup_{m\to\infty}\norm{T_m}\lesssim \limsup_{j\to\infty}\sup_{\norm{f}=1}\left\{\norm{Tg}: g=\frac{PM_{1_{G_j}}f}{\norm{M_{1_{G_j} }f}}\right\}.$$

Let $\epsilon>0$. There exists a normalized sequence $\{f_j\}$ in $\Bo$ such that 

$$ \limsup_{j\to \infty}\sup_{\norm{f}=1}\left\{\norm{Tg}: g=\frac{PM_{1_{G_j}}f}{\norm{M_{1_{G_j} }f}}\right\}-\epsilon\leq \limsup_{j\to\infty}\norm{Tg_j},$$ where
$$g_j:=\frac{PM_{1_{G_j} }f_j}{\norm{M_{G_j}f_j }}=\frac{\int_{G_j}\ip{f_j}{k_w}k_w\,d\la(w)}{\left(\int_{G_j}\abs{\ip{f_j}{k_w}}^2d\la(w)\right)^{\frac{1}{2}}}.$$ Recall that $\abs{U^*_{z}k_w} \simeq\abs{k_{\vp_{z}(w)}}$, and therefore, $U^*_{z}k_w = c(w,z)k_{\vp_{z}(w)}$, where $c(w,z)$ is some function so that $\abs{c(w,z)}\simeq 1$.

For each $j$ pick $z_j\in G_j$. There exists $\rho>0$ such that $G_j\subset D(z_j, \rho)$ for all $j$.  Doing a simple change of variables we obtain 
$$g_j=\int_{\vp_{z_j}(G_j)}a_j(\vp_{z_j}(w))U^*_{z_j}k_w\,d\la(\vp_{z_j}(w)),$$ where  $a_j(w)$ is defined to be 
$$
\frac{\ip{f_j}{k_w}}{c(\vp_{z_j}(w), z_j)\left(\int_{G_j}\abs{\ip{f_j}{k_w}}^2\,d\la(w)\right)^{\frac{1}{2}}}
$$ 
on $G_j$, and zero otherwise. 

We claim that $g_j=U^*_{z_j}h_j$, where  $$h_j(z):=\int_{\vp_{z_j}(G_j)}a_j(\vp_{z_j}(w))k_w(z)\,d\la(\vp_{z_j}(w)).$$ First, using the generalized Minkowski Inequality it is easy to see that $h_j\in L^2(\Om;\sigma)$ and consequently in $\Bo$. To prove that the claim is correct we only need to show that for each $g\in L^2(\Om;\sigma)$ we have that $\ip{g_j}{g}=\ip{h_j}{U_{z_j}g}$. This can be readily done using Fubini's Theorem. 
The total variation of each member of the sequence of measures $\{a_j(\vp_{z_j}(w))\,d\la(\vp_{z_j}(w))\}$, as elements in the dual of $C(\overline{D(0,\rho)})$, satisfies $\norm{a_j(\vp_{z_j}(w))\,d\la(\vp_{z_j}(w))}\lesssim \la(D(0,\rho))$, where the implied constant is only dependent on the one from condition \hyperref[A4]{A.4}. Therefore, there exists a weak-$*$ convergent subsequence which approaches some measure $\nu$. Abusing notation slightly we keep indexing this subsequence by $j$. Let $$ h(z):=\int_{D(0,\rho)}k_w(z)\,d\nu(w).$$ The mentioned weak-$*$ convergence implies that $h_j$ converges to $h$ pointwise. Using the Lebesgue Dominated Convergence Theorem we obtain that $h_j\to h$ in $L^2(\Om;\sigma)$. This implies that $h\in\Bo$. In addition, $1=\norm{g_j}=\norm{U^*_{z_j}h_j}\simeq \norm{h_j}$. Thus, $\norm{h}\lesssim 1$. So, we finally have 
$$ \limsup_{m\to\infty}\norm{T_m}\lesssim \lim_{j\to\infty}\norm{Tg_j}+\epsilon =  \lim_{j\to\infty} \norm{TU^*_{z_j}h_j}+\epsilon\lesssim \limsup_{j\to\infty}\norm{TU^*_{z_j}h}+\epsilon\lesssim  \limsup_{j\to\infty}\norm{T^{z_j}h}+\epsilon $$
and hence $$\limsup_{m\to\infty}\norm{T_m}\lesssim  \sup_{\norm{f} \leq 1}\limsup_{d(z,0)\to \infty} \norm{T^zf}.$$

(b) By a simple density argument it is sufficient to show that $\norm{T^zk_w}\to 0$ as $d(z,0)\to\infty$. But this is clear since $\norm{T^zk_w}\simeq \norm{Tk_{\varphi_z(w)}}$ and $d(\varphi_z(w),0)\simeq d(w,z) \to \infty$ as $d(z,0)\to\infty$. We are done.

\end{proof}

\begin{cor} 
\label{ToeCompact}
Let $\Bo$ be a strong Bergman-type space for which $\kappa>0$. If $T$ is in the Toeplitz algebra $\mathcal{T}_{L^\infty}$ then
\begin{itemize}
\item[(a)] $ \|T\|_e\simeq \sup_{\norm{f} \leq 1}\limsup_{d(z,0)\to\infty} \norm{T^zf}.$
\item[(b)] If $\lim_{d(z,0)\to\infty}\norm{Tk_z}=0$ then $T$ must be compact.
\end{itemize}
\end{cor}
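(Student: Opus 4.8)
The plan is to deduce this from Theorem~\ref{RKTC} by verifying that every $T$ in the Toeplitz algebra $\mathcal{T}_{L^\infty}$ satisfies the two hypotheses \eqref{e1}, i.e., that $\sup_{z\in\Om}\norm{U_zTk_z}_{L^p(\Om;d\sigma)}<\infty$ and $\sup_{z\in\Om}\norm{U_zT^*k_z}_{L^p(\Om;d\sigma)}<\infty$ for some $p>\frac{4-\kappa}{2-\kappa}$. Since $\mathcal{T}_{L^\infty}$ is the operator-norm closure of the algebra of finite sums of finite products of Toeplitz operators $T_{u_{j,l}}$ with $u_{j,l}\in L^\infty(\Om)$, and since the bound $\norm{Tk_z}\le\norm{T}$ together with $\norm{U_z}\simeq 1$ on $L^2$ is not by itself enough (we need an $L^p$ bound, $p>2$), the real content is to show the $L^p$ estimate is stable under the algebraic operations and then passes to the closure.

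First I would handle a single Toeplitz operator $T_u$ with $u\in L^\infty$. In a strong Bergman-type space, $T_u^z=U_zT_uU_z^*=T_{u\circ\vp_z}$, so $U_zT_uk_z = U_zT_uU_z^*U_zk_z$; using $U_zk_z = k_z(\vp_z(\cdot))k_z \simeq$ (up to a unimodular factor) $k_0\equiv 1$ — recall from the proof of the RKT corollary for Toeplitz operators that $k_0\equiv 1$ — we get $U_zT_uk_z \simeq T_{u\circ\vp_z}1 = P(u\circ\vp_z)$ pointwise in modulus. Since $\norm{u\circ\vp_z}_{L^\infty}=\norm{u}_{L^\infty}$ and $P$ is bounded on $L^p(\Om;d\sigma)$ for all $1<p<\infty$ when $\kappa>0$ (Lemma~\ref{proj}(b)), and $\sigma$ is finite so $L^\infty\subset L^p$, we obtain $\sup_z\norm{U_zT_uk_z}_{L^p}\le C_p\norm{u}_{L^\infty}|\sigma(\Om)|^{1/p}<\infty$; the same for $T_u^*=T_{\bar u}$. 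For a product $T_{u_1}\cdots T_{u_n}$, conjugation gives $(T_{u_1}\cdots T_{u_n})^z = T_{u_1\circ\vp_z}\cdots T_{u_n\circ\vp_z}$, so $U_z(T_{u_1}\cdots T_{u_n})k_z \simeq T_{u_1\circ\vp_z}\cdots T_{u_n\circ\vp_z}1 = T_{u_1\circ\vp_z}\cdots T_{u_{n-1}\circ\vp_z}(u_n\circ\vp_z)$; peeling off one factor at a time, each $T_{u_i\circ\vp_z}=PM_{u_i\circ\vp_z}$ maps $L^p\to L^p$ with norm at most $C_p\norm{u_i}_{L^\infty}$ (multiplication by an $L^\infty$ function, then $P$), so the whole product is bounded on $L^p$ with norm $\le C_p^{n}\prod_i\norm{u_i}_{L^\infty}$, uniformly in $z$. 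Summing over the finitely many terms of $T=\sum_l\prod_j T_{u_{j,l}}$ gives the uniform $L^p$ bound for any element of the algebra, and moreover this bound is controlled by (a polynomial expression in) the $L^\infty$ norms of the symbols, hence in particular by the operator norm only up to the algebraic structure — which is why the final step requires care.

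For the closure step, suppose $T_k\to T$ in $\mathcal{L}(\Bo,\Bo)$ with each $T_k$ in the algebra. One cannot directly conclude $\sup_z\norm{U_zTk_z}_{L^p}<\infty$ from $\sup_z\norm{U_z(T_k)k_z}_{L^p}<\infty$ because the latter bounds are not a priori uniform in $k$. The way around this is \emph{not} to verify \eqref{e1} for the limit $T$ directly, but to observe that for each fixed $k$, Theorem~\ref{RKTC}(a) gives $\norm{T_k}_e\simeq\sup_{\norm f\le 1}\limsup_{d(z,0)\to\infty}\norm{T_k^z f}$; and since $\norm{T_k-T}\to 0$ implies $\norm{T_k}_e\to\norm{T}_e$ and $\norm{T_k^z f - T^z f}\le\norm{T_k-T}\norm{f}$ uniformly in $z$ (as $\norm{U_z},\norm{U_z^*}\simeq 1$), the quantity $\sup_{\norm f\le1}\limsup_{d(z,0)\to\infty}\norm{T^z f}$ is the limit of the corresponding quantities for $T_k$. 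Passing to the limit in the quasi-equality $\norm{T_k}_e\simeq\sup_{\norm f\le1}\limsup_{d(z,0)\to\infty}\norm{T_k^z f}$ yields the same quasi-equality for $T$, proving (a). For (b), if $\norm{Tk_z}\to 0$ as $d(z,0)\to\infty$, then $\norm{T^zk_w}\simeq\norm{Tk_{\vp_z(w)}}\to 0$ since $d(\vp_z(w),0)\simeq d(z,w)\to\infty$, so by density of the span of the $k_w$ (and the uniform operator bound $\norm{T^z}\lesssim\norm{T}$) we get $\norm{T^z f}\to 0$ for all $f\in\Bo$; hence the right-hand side of (a) vanishes and $\norm{T}_e=0$, i.e., $T$ is compact.

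The main obstacle is exactly the closure step: making sure one passes to the limit in the \emph{correct} quantity. The natural but flawed route — trying to show $T\in\mathcal{T}_{L^\infty}$ itself satisfies \eqref{e1} — fails because the $L^p$-bounds for the approximants need not be uniform, so instead one must apply Theorem~\ref{RKTC}(a) to each approximant (which is legitimate since each lies in the algebra and hence satisfies \eqref{e1}) and then use operator-norm convergence to transfer the essential-norm formula to the limit. Once this is set up, everything else is the routine Toeplitz-algebra bookkeeping sketched above together with the $L^p$-boundedness of $P$ from Lemma~\ref{proj}(b).
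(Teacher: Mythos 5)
Your proposal is correct and follows essentially the same route as the paper: verify the hypothesis \eqref{e1} for finite sums of finite products of Toeplitz operators via the identity $U_zTk_z=T_{u_1\circ\vp_z}\cdots T_{u_n\circ\vp_z}k_0$ and the $L^p$-boundedness of $P$ from Lemma~\ref{proj}(b), apply Theorem~\ref{RKTC} to these, and then transfer the essential-norm quasi-equality to the operator-norm closure by approximation (your ``pass to the limit in the quasi-equality'' is the same $\epsilon$-argument the paper writes out explicitly). Your explicit warning that \eqref{e1} need not survive the closure, and your derivation of (b) from (a) via density, likewise match the paper's reasoning.
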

In the case of the unit ball $\B_n$, statement (a) corresponds to \cite{MSW}*{Corollary 5.4}, for the polydisc $\D^n$ \cite{MW}*{Theorem 6.4}. Notice how easy it is to deduce these results when phrased in the more general Bergman-type spaces.

\begin{proof} We first show that if $T$ is a finite sum of finite products of Toeplitz operators with bounded symbols then $T$ satisfies~\eqref{e1} for all $p>1$. By triangle inequality it is enough to consider the case when $T=T_{u_1}T_{u_2}\cdots T_{u_n}$ is a finite product of Toeplitz operators with bounded symbols. Then $$ U_zTk_z=T_{u_1\circ \varphi_{z}}T_{u_2\circ \varphi_{z}}\cdots T_{u_n\circ \varphi_{z}}k_0.$$ It follows now easily from Lemma~\ref{proj} that 
$\sup_{z\in\Om}\norm{U_zTk_z}_{L^p(\Om;d\sigma)}<\infty.$ The proof of the other inequality is basically the same. Therefore, by Theorem~\ref{RKTC}, we have that both (a) and (b) hold if $T$ is a finite sum of finite products of Toeplitz operators with bounded symbols and $\kappa>0$. 

In the general case we prove only part (a) since part (b) is an easy consequence of (a). Let $T$ be an arbitrary operator in the Toeplitz algebra $\mathcal{T}_{L^\infty}$. For any given $\epsilon>0$ there exists $S$ which is a finite sum of finite products of Toeplitz operators such that $\norm{T-S}_e\leq \norm{T-S}<\epsilon$. Clearly, using the supposition that the result is true for finite sums of finite products, $$\norm{T}_e\leq \norm{T-S}_e+\norm{S}_e<\epsilon+ \sup_{\norm{f} \leq 1}\limsup_{d(z,0)\to\infty} \norm{S^zf}.$$ Next, for every $f\in\Bo$ with $\norm{f}\leq 1$ we have $$\norm{S^zf}\leq \norm{(T-S)^zf}+\norm{T^zf}\lesssim \epsilon+\norm{T^zf}.$$ By combining the last two inequalities we easily obtain the desired one for $T$. The opposite inequality holds for general operators $T$ and it was proved in the course of proving the Theorem~\ref{RKTC}.
\end{proof}

In the case of a classical Bargmann-Fock space one can also show (see~\cite{CWZ}*{page 11}) that if $T$ is a finite sum of finite products of Toeplitz operators with bounded symbols then $T$ satisfies~\eqref{e1} for some $p>2$. Therefore the conclusions of the previous Corollary hold for the Bargmann-Fock space as well (even though $\kappa=0$). This gives a much shorter and simpler proof to the corresponding results in~\cite{BI}*{Corollary 6.3}.

Our next goal is to show that the previous corollary holds even with a weaker assumption.  Let $\textnormal{BUC}(\Omega,d)$ denote the algebra of complex-valued functions that are bounded and uniformly continuous on $(\Om,d)$.  Let $\mathcal{T}_\textnormal{BUC}$ denote the Toeplitz algebra generated by the Toeplitz operators with symbols from $\textnormal{BUC}(\Omega,d)$

In this section we show that $\ip{Tk_z}{k_z}\to 0$ as $d(z,0)\to\infty$ already implies that $T$ is compact when $T\in \mathcal{T}_{\textnormal{BUC}}$.  Recall that for a given operator $T$ the Berezin transform of $T$ is a function on $\Om$ defined by
$$
\tilde{T}(z):=\ip{T\kbz}{\kbz}.
$$

\begin{thm}
\label{Berezin} Let $T\in\mathcal{T}_\textnormal{BUC}$. Then  $\limsup_{d(z,0)\to\infty} \tilde{T}(z)=0$ if and only if 
$$
\limsup_{d(z,0)\to\infty} \norm{T^zf}=0
$$ 
for every $f\in\Bbt$. In particular, if the Berezin transform $\tilde{T}(z)$ ``vanishes at the boundary of $\Om$'' then the operator $T$ must be compact.
\end{thm}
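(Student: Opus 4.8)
\emph{Overall plan.} The plan is to prove the two implications of the equivalence separately, and then to read off compactness from the direction ``$\tilde T$ vanishes at the boundary $\Rightarrow$ $\limsup_{d(z,0)\to\infty}\|T^zf\|=0$ for all $f$''. \emph{The easy implication} is almost immediate: assuming $\limsup_{d(z,0)\to\infty}\|T^zf\|=0$ for every $f\in\Bbt$, specialize to $f=k_0$. In a strong Bergman-type space $k_0\equiv1$ and $U_z$ is unitary with $U_z^2=I$, so $U_z^*k_0=U_zk_0=k_z$, and therefore $\tilde T(z)=\langle Tk_z,k_z\rangle=\langle TU_z^*k_0,U_z^*k_0\rangle=\langle U_zTU_z^*k_0,k_0\rangle=\langle T^zk_0,k_0\rangle$, whence $|\tilde T(z)|\le\|T^zk_0\|\to0$. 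This uses nothing about $\mathcal{T}_{\textnormal{BUC}}$.

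\emph{The engine of the converse: an orbit-compactness lemma.} I would prove that for every $T\in\mathcal{T}_{\textnormal{BUC}}$ and every sequence $\{z_j\}$ with $d(z_j,0)\to\infty$, some subsequence of $\{T^{z_j}\}$ converges in the \emph{strong} operator topology to a bounded operator $T_{(x)}$. First for $T=\sum_l\prod_k T_{u_{k,l}}$ a finite sum of finite products of Toeplitz operators with $u_{k,l}\in\textnormal{BUC}(\Om,d)$: then $T^{z_j}=\sum_l\prod_k T_{u_{k,l}\circ\varphi_{z_j}}$, and the families $\{u_{k,l}\circ\varphi_{z_j}\}_j$ are uniformly bounded and equicontinuous --- this is exactly where A.2 enters, since the $\varphi_{z_j}$ are $d$-quasi-isometries with constants independent of $z_j$, so uniform continuity of each $u_{k,l}$ transfers uniformly in $j$. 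Since $(\Om,d)$ is exhausted by its compact closed balls around $0$ it is $\sigma$-compact, so Arzel\`a--Ascoli gives a subsequence along which $u_{k,l}\circ\varphi_{z_j}\to v_{k,l}$ uniformly on compacta. Because the symbols are uniformly bounded and any $f\in\Bbt$ lies in $L^2(\Om;d\sigma)$, dominated convergence gives $(u_{k,l}\circ\varphi_{z_j})f\to v_{k,l}f$ in $L^2(\Om;d\sigma)$, hence $T_{u_{k,l}\circ\varphi_{z_j}}\to T_{v_{k,l}}$ strongly; as the factors are uniformly norm-bounded the products converge strongly too, so $T^{z_j}\to S:=\sum_l\prod_k T_{v_{k,l}}$ strongly. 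For general $T\in\mathcal{T}_{\textnormal{BUC}}$ I would pick finite combinations $S_m\to T$ in operator norm, diagonalize the subsequences obtained for each $S_m$, and use that $U_{z_j}$ is unitary to get $\|(S_m-S_{m'})^{z_j}f\|\le\|S_m-S_{m'}\|\,\|f\|$; letting $j\to\infty$ shows the strong limits $S_{m,(x)}$ are norm-Cauchy, and one checks that their limit $T_{(x)}$ is the desired strong limit of $T^{z_j}$.

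\emph{The converse and the corollary.} Granting the lemma, assume $\tilde T$ vanishes at the boundary, i.e.\ $\lim_{d(z,0)\to\infty}|\tilde T(z)|=0$, and suppose toward a contradiction that $\|T^{z_j}f\|\ge\delta>0$ for some $f\in\Bbt$ and some $z_j$ with $d(z_j,0)\to\infty$. Pass to a subsequence with $T^{z_j}\to T_{(x)}$ strongly. The same unimodular bookkeeping as in the easy implication gives $\widetilde{T^{z_j}}(w)=\tilde T(\varphi_{z_j}(w))$ for all $w$, and $d(\varphi_{z_j}(w),0)\simeq d(w,z_j)\to\infty$, so the hypothesis forces $\widetilde{T_{(x)}}(w)=\lim_j\widetilde{T^{z_j}}(w)=0$ for every $w\in\Om$. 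The Berezin transform is injective on $\LL(\Bbt)$ --- the function $(z,w)\mapsto\langle T_{(x)}K_w,K_z\rangle$ is holomorphic in $z$, conjugate-holomorphic in $w$, vanishes on the totally real diagonal, hence vanishes identically, and the $K_z$ span a dense subspace --- so $T_{(x)}=0$; but strong convergence gives $\|T^{z_j}f\|\to\|T_{(x)}f\|=0$, contradicting $\|T^{z_j}f\|\ge\delta$. Hence $\limsup_{d(z,0)\to\infty}\|T^zf\|=0$ for every $f\in\Bbt$, which completes the equivalence. Finally, since this holds for all $f$ we get $\sup_{\|f\|\le1}\limsup_{d(z,0)\to\infty}\|T^zf\|=0$, and as $T\in\mathcal{T}_{\textnormal{BUC}}\subseteq\mathcal{T}_{L^\infty}$, Corollary~\ref{ToeCompact}(a) gives $\|T\|_e\simeq0$, i.e.\ $T$ is compact.

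\emph{Main obstacle.} The delicate point is securing \emph{strong} (not merely weak) operator convergence of a subsequence of $\{T^{z_j}\}$: weak operator convergence would not let one pass $\|T^{z_j}f\|$ to the limit, which is precisely what the contradiction argument needs. For finite combinations this is the Arzel\`a--Ascoli step, whose hinge is the uniform equicontinuity of $\{u\circ\varphi_z\}_z$ supplied by A.2; extending it across the operator-norm closure of the Toeplitz algebra is what the diagonalization-plus-completeness argument accomplishes. The injectivity of the Berezin transform on $\LL(\Bbt)$ is classical but should be recorded carefully at this level of generality.
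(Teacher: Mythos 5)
Your proof is correct and follows essentially the same route as the paper's: the easy direction via $f=k_0$, and the converse by extracting an SOT-convergent sub-family of $\{T^{z_j}\}$ for symbols in $\textnormal{BUC}(\Om,d)$ (the content of the paper's Lemma~\ref{SOT}, which the paper proves with subnets and the Stone--Cech compactification where you use Arzel\`a--Ascoli on the $\sigma$-compact space $(\Om,d)$ -- both hinge on the equicontinuity of $\{u\circ\varphi_z\}$ from \hyperref[A2]{A.2}), followed by killing the limit via injectivity of the Berezin transform and passing to the norm closure by $\epsilon$-approximation. The only organizational difference is that you identify the full strong limit $T_{(x)}$ of $T^{z_j}$ and show it vanishes, whereas the paper first shows the WOT limit is zero and then bounds $\limsup_j\norm{T^{z_j}f}$ by a multiple of $\epsilon$ using the SOT limit of the approximant; this is a cosmetic rearrangement of the same ingredients.
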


For the remainder of this section, SOT will denote the strong operator topology in $\mathcal{L}(\Bo,\Bo)$ and WOT will denote the weak operator topology in $\mathcal{L}(\Bo,\Bo)$.

Key to proving Theorem \ref{Berezin} will be the following lemma.
\begin{lm}\label{SOT} Let $u\in\textnormal{BUC}(\Omega,d)$.  For any sequence $\{z_n\}$ in $\Omega$, the sequence of Toeplitz operators $T_{u\circ \varphi_{z_n}}$ has a \textnormal{SOT} convergent subnet.
\end{lm}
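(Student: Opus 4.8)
The plan is to exploit compactness in an appropriate topology on the unit ball of $\mathcal{L}(\Bo,\Bo)$. Since $u \in \textnormal{BUC}(\Omega,d)$, the symbols $u\circ\varphi_{z_n}$ are uniformly bounded by $\norm{u}_{L^\infty}$, and hence the operators $T_{u\circ\varphi_{z_n}}$ all lie in the ball of radius $\norm{u}_{L^\infty}$ in $\mathcal{L}(\Bo,\Bo)$. This ball is compact in the weak operator topology (WOT) — indeed, since $\Bo$ is separable by \hyperref[A2]{A.2} and \hyperref[A3]{A.3}, bounded balls are WOT-metrizable, so one can even extract a WOT-convergent subsequence. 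So first I would pass to a WOT-convergent subnet $T_{u\circ\varphi_{z_\gamma}} \to S$ for some $S \in \mathcal{L}(\Bo,\Bo)$.

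The heart of the matter is to upgrade WOT convergence to SOT convergence, and this is where uniform continuity of $u$ enters decisively. The key computation is that for $f,g$ in the span of reproducing kernels,
\begin{equation*}
\ip{T_{u\circ\varphi_z}f}{g} = \int_{\Om} u(\varphi_z(w)) f(w)\overline{g(w)}\,d\sigma(w),
\end{equation*}
so $\ip{T_{u\circ\varphi_z}f}{g}$ depends on $z$ only through the ``pulled-back'' symbol. The trick for SOT convergence is the standard one: for a uniformly bounded net, $T_{u\circ\varphi_{z_\gamma}} \to S$ in SOT provided $\norm{T_{u\circ\varphi_{z_\gamma}}f}^2 = \ip{T^*_{u\circ\varphi_{z_\gamma}}T_{u\circ\varphi_{z_\gamma}}f}{f} \to \ip{S^*Sf}{f}$, i.e.\ provided the products $T^*_{u\circ\varphi_{z_\gamma}}T_{u\circ\varphi_{z_\gamma}} = T_{\bar u\circ\varphi_{z_\gamma}}T_{u\circ\varphi_{z_\gamma}}$ converge WOT to $S^*S$. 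Thus it suffices to show: if $T_{u\circ\varphi_{z_\gamma}} \to S$ in WOT then in fact $T_{u\circ\varphi_{z_\gamma}} \to S$ in SOT and $T_{\bar u\circ\varphi_{z_\gamma}}\to S^*$ in SOT, so that the product also converges. The cleanest route to the SOT statement is to observe that for $h$ in the span of normalized reproducing kernels, $\abs{(T_{u\circ\varphi_z}f)(z')}$ and more to the point $\norm{T_{u\circ\varphi_{z_\gamma}}f - T_{u\circ\varphi_{z_\delta}}f}$ can be controlled by the oscillation of $u$: uniform continuity of $u$ together with quasi-invariance of the metric (\hyperref[A2]{A.2}) means that the family $\{u\circ\varphi_z\}_{z\in\Om}$ is equicontinuous, and on the other hand each $f$ in our dense class has rapidly decaying "tails" against the reproducing kernels by the Rudin–Forelli estimates \hyperref[A6]{A.6}. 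Combining equicontinuity of the symbols with the integral representation and a Schur-type/dominated-convergence argument shows the net $\{T_{u\circ\varphi_{z_\gamma}}f\}$ is Cauchy in norm for each fixed $f$ in the dense class, hence (by uniform boundedness) for all $f\in\Bo$; its limit must agree with $Sf$ since WOT and SOT limits coincide when both exist.

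The main obstacle I anticipate is making the equicontinuity argument interact correctly with the non-compactness of $\Omega$: uniform continuity of $u$ controls oscillation on balls of fixed radius, but one must still handle the contribution from the region far from a base point, and this is exactly where one invokes that $\sigma$ is a finite measure together with \hyperref[A6]{A.6} (as in Lemma~\ref{proj}) to see that the far-away contribution to $\ip{T_{u\circ\varphi_z}f}{g}$ is uniformly small. A secondary technical point is the passage from subnet to subnet when also demanding WOT convergence of $T_{\bar u\circ\varphi_{z_\gamma}}$; but since the ball is WOT-compact one simply refines once more, or better, notes that $T_{\bar u \circ \varphi_z} = T_{u\circ\varphi_z}^*$ and adjoints are WOT-continuous on bounded sets, so no extra refinement is needed. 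Once SOT convergence of both $T_{u\circ\varphi_{z_\gamma}}$ and its adjoint is in hand, SOT convergence of the product follows from the standard fact that multiplication is jointly SOT-continuous on bounded sets, completing the proof.
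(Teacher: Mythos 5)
Your overall strategy is close in spirit to the paper's: both arguments reduce to the behavior of the pulled-back symbols $u\circ\varphi_z$, both use equicontinuity of this family (from uniform continuity of $u$ plus \hyperref[A2]{A.2}), and both handle the non-compactness of $\Omega$ by splitting $\int\abs{(u\circ\varphi_{z}-v)f}^2\,d\sigma$ into a far region (small because $f\in L^2(\sigma)$) and a compact region (small by convergence of the symbols there). However, your write-up has a genuine gap at the decisive step. You assert that equicontinuity of $\{u\circ\varphi_z\}$, together with tail estimates, makes the net $\{T_{u\circ\varphi_{z_\gamma}}f\}$ \emph{Cauchy} in norm. Equicontinuity plus uniform boundedness does not make a net Cauchy; it only makes the family \emph{precompact} in the topology of local uniform convergence. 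The missing idea --- which is exactly the content of the paper's proof --- is to extract a further subnet along which the symbols $u\circ\varphi_{z_\gamma}$ converge \emph{pointwise}; equicontinuity then upgrades this to local uniform convergence to some bounded continuous $v$, and the SOT limit is identified concretely as the Toeplitz operator $T_v$. (The paper gets the pointwise-convergent subnet via the Stone--\v{C}ech compactification and Tychonoff; an Arzel\`a--Ascoli extraction would also do.) Without this extraction your Cauchy claim is simply false: the symbols $u\circ\varphi_{z_\gamma}$ can oscillate between two distinct local uniform limits while remaining equicontinuous, and your initial passage to a WOT-convergent subnet of the \emph{operators} does not by itself force the \emph{symbols} to settle down, since in this abstract setting it is not clear that $T_{v_1}=T_{v_2}$ forces $v_1=v_2$.

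A secondary issue: your detour through $T^*_{u\circ\varphi_{z_\gamma}}T_{u\circ\varphi_{z_\gamma}}\to S^*S$ in WOT is circular as written --- you reduce SOT convergence to WOT convergence of the products, and then reduce that to SOT convergence of the factors, which is what you set out to prove. Multiplication is not jointly WOT-continuous, so this loop cannot be closed without the direct argument. The fix is to drop both the initial WOT-compactness extraction and the $S^*S$ reduction entirely: once you have a subnet whose symbols converge locally uniformly to $v$, the near/far splitting you already describe gives $T_{u\circ\varphi_{z_\gamma}}f\to T_vf$ in norm for every $f\in\Bo$ directly, which is all the lemma asks for.
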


\begin{proof} It is enough to show that the sequence $u\circ \varphi_{z_n}$ has a subnet that converges uniformly on compact subsets of $\Om$. Indeed, assume that this is true. Let $u\circ \varphi_{{z_n}_k}$ be one such subnet which converges to some $v$ uniformly on compact sets. This $v$ will clearly be bounded and continuous on $\Om$. Fix $f\in\Bo$.   Then observe that 
$$\norm{T_{u\circ \varphi_{{z_n}_k}}f-T_vf}^2\leq \norm{(u\circ \varphi_{{z_n}_k}-v)f}^2$$
$$=\int_{\Om\setminus D(0,R)}\abs{(u(\varphi_{{z_n}_k}(w))-v(w))f(w)}^2\,d\sigma(w)+\int_{D(0,R)}\abs{(u(\varphi_{{z_n}_k}(w))-v(w))f(w)}^2\,d\sigma(w).$$
Choosing $R>0$ large enough we can make the first integral arbitrary small. The second integral can then be made arbitrary small by choosing $k$ ``big'' enough. Therefore, $T_{u\circ \varphi_{{z_n}_k}}$ converges in the strong operator topology. 

So, we only need to show that  $u\circ \varphi_{z_n}$ has a subnet that converges uniformly on compact subsets. Property \hyperref[A2]{A.2} and the uniform continuity of $u$  easily imply that $\{u\circ \varphi_{z}: z\in\Om\}$ is equicontinuous. Therefore, it is enough to show that $u\circ \varphi_{z_n}$ has a pointwise convergent subnet. 
Let $\beta\Om$ be the Stone-Cech compactification of $(\Om,d)$. We denote again by $u$ the unique continuous extension of $u$ to $\beta\Om$. By the Tychonoff Compactness Theorem, there exists a subnet $\vp_{{z_n}_k}:\Om\to\beta\Om$ which converges pointwise to some $\vp:\Om\to \beta\Om$. Clearly, then $u\circ \vp_{{z_n}_k}$ is pointwise convergent and so we are done.

\end{proof}

\begin{proof}[Proof of Theorem~\ref{Berezin}]   Suppose that  $\limsup_{d(z,0)\to\infty} \norm{T^zf}=0$ for every $f\in\Bbt$. Taking $f\equiv k_0$ we easily obtain $\abs{\tilde{T}(z)}\leq \norm{Tk_z}$, which implies that $\limsup_{d(z,0)\to\infty} \tilde{T}(z)=0$. 

In the other direction, suppose that  $\lim_{d(z,0)\to\infty} \abs{\tilde{T}(z)}=0$ but  $\limsup_{d(z,0)\to\infty} \norm{T^zf}>0$ for some $f\in\Bbt$. In this case there exists a sequence $\{z_n\}$ with $d(z_n,0)\to\infty$  such that $\norm{T^{z_n}f}\geq c>0.$ We will derive a contradiction by showing that $T^{z_n}$ has a subnet that converges to the zero operator in SOT. 
Notice first that $S_{z_n}$ has a subnet which is convergent in WOT. With slight abuse of notation we continue to denote the subnet by $\{z_n\}$. Denote the WOT limit by $S$. Then $\tilde{T}^{z_n}\to \tilde{S}$ pointwise. The assumption  $\lim_{d(z,0)\to\infty} \abs{\tilde{T}(z)}=0$ implies that $\tilde{T}^{z_n}\to 0$ pointwise as well and hence $\tilde{S}\equiv 0$. Therefore $S$ is the zero operator and consequently $T^{z_n}$ converges to zero in the WOT.

Next, we use the fact that $T$ is in $\mathcal{T}_{\textnormal{BUC}}$ to show that there exists a subnet of $T^{z_n}$ which converges in SOT. Let $\epsilon>0$, then there exists an operator $A$ which is a finite sum of finite products of Toeplitz operators with symbols in $\textnormal{BUC}(\Om,d)$ such that $\norm{T-A}_{\mathcal{L}(\Bo,\Bo)}<\epsilon$.  We first show that $A^{z_n}$ must have a convergent subnet in SOT. By linearity we can consider only the case when $A=T_{u_1}T_{u_2}\cdots T_{u_k}$ is a finite product of Toeplitz operators. As noticed before $$A^{z_n}=T_{u_1\circ \varphi_{z_n}}T_{u_2\circ \varphi_{z_n}}\cdots T_{u_k\circ \varphi_{z_n}}.$$ Now, since a product of SOT convergent nets is SOT convergent, it is enough to treat the case when $A=T_{u}$ is a single Toeplitz operator. But, the single Toeplitz operator case follows directly from Lemma~\ref{SOT}. 

Denote by $B$ the SOT limit of this subnet $A^{{z_n}_k}$. The fact that $T^{z_n}$ converges to zero in the WOT easily implies that $\norm{B}_{\mathcal{L}(\Bo,\Bo)}\lesssim \epsilon$. Now, for every $f\in\Bo$ of norm no greater than $1$ we have $$ \norm{T^{{z_n}_k}f}\leq \norm{A^{{z_n}_k}f}+\norm{(T^{{z_n}_k}-A^{{z_n}_k})f}. $$ Therefore, $\limsup \norm{T^{{z_n}_k}f}\lesssim \norm{Bf}+\epsilon\lesssim 2\epsilon$. Finally, the fact that $\epsilon>0$ was arbitrary implies that $\lim \norm{T^{{z_n}_k}f}=0$ for all $f$. Consequently, we found a subnet $T^{{z_n}_k}$ which converges to the zero operator in SOT. We are done.
 
\end{proof}

\begin{rem}
It is quite possible that the last theorem holds for all operators in the Toeplitz algebra. This is certainly true in the classical Bergman and Bargmann-Fock spaces as a consequence of the fact that each Toeplitz operator $T_u$ can be approximated by Toeplitz operators $T_{B_k(u)}$ with $B_k(u)$ being the so called $k$-Berezin transform of $u$, see any of \cites{BI, MW, MSW, Sua, Sua2} for examples of this phenomenon. It is interesting to see if for Bergman spaces over bounded symmetric domains which satisfy the Rudin-Forelli estimates one can combine the techniques from~\cite{E} and~\cite{Sua2} to prove that
$$
\mathcal{T}_{L^\infty}=\mathcal{T}_{\textnormal{BUC}}.
$$  

It will be also interesting (and probably much harder) to see if this continues to hold in the general setting of Bergman-type spaces.
\end{rem}

\begin{rem} After this work was finished, we found out about the results in~\cite{CWZ}, where simultaneously and independently Theorems~\ref{RKT} and~\ref{RKTC} (b) were proved for the special case of unweighted Fock space.  The authors would like to thank Kehe Zhu, Joshua Isralowitz, and the anonymous referee for their useful comments. 
\end{rem}

\begin{bibdiv}
\begin{biblist}

\bib{APR}{article}{
   author={Aleman, A.},
   author={Pott, S.},
   author={Reguera, M. C.},
   title={Sarason conjecture on the Bergman space},
   eprint={http://arxiv.org/abs/1304.1750},
   status={preprint},
   pages={1--31},
   date={2013}
}

\bib{ARS}{article}{
   author={Arcozzi, N.},
   author={Rochberg, R.},
   author={Sawyer, E.},
   title={Carleson measures and interpolating sequences for Besov spaces on
   complex balls},
   journal={Mem. Amer. Math. Soc.},
   volume={182},
   date={2006},
   number={859},
   pages={vi+163}
}

\bib{Ax}{article}{
   author={Axler, S.},
   title={The Bergman space, the Bloch space, and commutators of multiplication operators},
   journal={Duke Math. J.},
   volume={53},
   date={1986},
   number={2},
   pages={315--332}
}

\bib{AZ}{article}{
   author={Axler, S.},
   author={Zheng, D.},
   title={Compact operators via the Berezin transform},
   journal={Indiana Univ. Math. J.},
   volume={47},
   date={1998},
   number={2},
   pages={387--400}
}

\bib{AZ2}{article}{
   author={Axler, S.},
   author={Zheng, D.},
   title={The Berezin transform on the Toeplitz algebra},
   journal={Studia Math.},
   volume={127},
   date={1998},
   number={2},
   pages={113--136}
}

\bib{Bon}{article}{
   author={Balogh, Z. M.},
   author={Bonk, M.},
   title={Gromov hyperbolicity and the Kobayashi metric on strictly pseudo convex domains},
   journal={Comment. Math. Helv.},
   volume={75},
   date={2000},
   pages={504--533}
   
}

\bib{Bar}{article}{
   author={Baranov, A.},
   author={Chalendar, I.},
   author={Fricain, E.},
   author={Mashreghi, J.},
   author={Timotin, D.},
   title={Bounded symbols and Reproducing Kernel Thesis for truncated Toeplitz operators},
   journal={J. Funct. Anal.},
   volume={259},
   date={2010},
   number={10},
   pages={2673-2701}
   
}

\bib{BI}{article}{
   author={Bauer, W.},
   author={Isralowitz, J.},
   title={Compactness characterization of operators in the Toeplitz algebra of the Fock space $F_\alpha^p$},
   journal={J. Funct. Anal.},
   volume={263},
   date={2012},
   number={5},
   pages={1323--1355},
   eprint={http://arxiv.org/abs/1109.0305v2}
   
}

\bib{BCZ}{article}{
   author={Berger, C. A.},
   author={Coburn, L. A.},
   author={Zhu, K. H.},
   title={Function theory on Cartan domains and Berezin-Toeplitz symbol calculus},
   journal={Amer. J. Math.},
   volume={110},
   date={1988},
   number={5},
   pages={921--953}
   
}

\bib{BonS}{article}{
   author={Bonk, M.},
   author={Schramm, O.},
   title={Embeddings of Gromov hyperbolic spaces},
   journal={Geom. Funct. Anal.},
   volume={10},
   date={2000},
   pages={266--306}
   
}

\bib{Bo}{article}{
   author={Bonsall, F. F},
   title={Boundedness of Hankel matrices},
   journal={J. London Math. Soc.},
   volume={29},
   date={1984},
   number={2},
   pages={289--300},
   
}

\bib{CWZ}{article}{
   author={Cao, G.},
   author={Wang, X.},
   author={Zhu, K.},
   title={Boundedness and compactness of operators on the Fock space},
   eprint={http://arxiv.org/pdf/1211.7030v1.pdf},
   status={preprint},
   pages={1--16},
   date={2012}
}

\bib{CLNZ}{article}{
   author={Choe, B. R.},
   author={Lee, Y. J.},
   author={Nam, K.},
   author={Zheng, D.},
   title={Products of Bergman space Toeplitz operators on the polydisk},
   journal={Math. Ann.},
   volume={337},
   date={2007},
   number={2},
   pages={295--316}
}

\bib{CPZ}{article}{
   author={Cao, G.},
   author={Wang, X.},
   author={Zhu, K.},
   title={Products of Toeplitz operators on the Fock space},
   eprint={http://arxiv.org/abs/1212.0045},
   status={preprint},
   pages={1--8},
   date={2012}
}

\bib{CR}{article}{
   author={Coifman, R. R.},
   author={Rochberg, R.},
   title={Representation theorems for holomorphic and harmonic functions in
   $L^{p}$},
   conference={
      title={Representation theorems for Hardy spaces},
   },
   book={
      series={Ast\'erisque},
      volume={77},
      publisher={Soc. Math. France},
      place={Paris},
   },
   date={1980},
   pages={11--66}
}

\bib{DT}{article}{
   author={Dieudonne, A.},
   author={Tchoundja, E.},
   title={Toeplitz operators with $L^1$ symbols on Bergman spaces in the unit ball of $\C^n$},
   journal={Adv. Pure Appl. Math.},
   volume={2},
   date={2010},
   pages={65--88}
}

\bib{E92}{article}{
   author={Engli{\v{s}}, M.},
   title={Density of algebras generated by Toeplitz operator on Bergman spaces},
   journal={Ark. Mat.},
   volume={30},
   date={1992},
   pages={227--243}
}

\bib{E}{article}{
   author={Engli{\v{s}}, M.},
   title={Compact Toeplitz operators via the Berezin transform on bounded
   symmetric domains},
   journal={Integral Equations Operator Theory},
   volume={33},
   date={1999},
   number={4},
   pages={426--455}
}

\bib{FK}{article}{
   author={Faraut, J},
   author={Koranyi, A},
   title={Function spaces and reproducing kernels on bounded symmetric domains},
   journal={J. Funct. Anal.},
   volume={89},
   date={1990},
   pages={64--89}
   
}

\bib{Gro}{article}{
   author={Gromov, M},
   title={Hyperbolic groups},
   series={Mathematical Sciences Research Institute Publications},
   volume={8},
   publisher={Springer-Verlag},
   place={New York-Heidelberg-Berlin},
   date={1987},
   pages={75--263}
   
}

\bib{I}{article}{
   author={Issa, H.},
   title={Compact Toeplitz operators for weighted Bergman spaces on bounded
   symmetric domains},
   journal={Integral Equations Operator Theory},
   volume={70},
   date={2011},
   number={4},
   pages={569--582}
}

\bib{JPR}{article}{
   author={Janson, S.},
   author={Peetre, J.},
   author={Rochberg, R.},
   title={Hankel forms and the Fock space},
   journal={Rev. Mat. Iberoamericana},
   volume={3},
   date={1987},
   number={1},
   pages={61--138}
   
}

\bib{LH}{article}{
   author={Li, S. X.},
   author={Hu, J. Y.},
   title={Compact operators on Bergman spaces of the unit ball},
   language={Chinese, with English and Chinese summaries},
   journal={Acta Math. Sinica (Chin. Ser.)},
   volume={47},
   date={2004},
   number={5},
   pages={837--844}
}

\bib{MZ}{article}{
   author={Miao, J.},
   author={Zheng, D.},
   title={Compact operators on Bergman spaces},
   journal={Integr. Equ. Oper. Theory},
   volume={48},
   date={2004},
   pages={61--79}
}

\bib{MW}{article}{
   author={Mitkovski, M.},
   author={Wick, B. D.},
   title={The Essential Norm of Operators on $A^p(\mathbb{D}^n)$},
   eprint={http://arxiv.org/abs/1208.5819},
   status={preprint},
   pages={1--45},
   date={2012}
}

\bib{MSW}{article}{
   author={Mitkovski, M.},
   author={Su\'arez, D.},
   author={Wick, B. D.},
   title={The Essential Norm of Operators on $A^p_\alpha(\mathbb{B}_n)$},
   eprint={http://arxiv.org/abs/1204.5548},
   status={preprint},
   pages={1--32},
   date={2012}
}

\bib{Naz}{article}{
   author={Nazarov, F.},
   title={A counterexample to Sarason's conjecture},
   eprint={http://www.math.msu.edu/~fedja/Preprints/Sarason.ps},
   status={preprint},
   pages={1--17},
   date={1997}
}

\bib{PS}{article}{
   author={Pott, S.},
   author={Strouse, E.},
   title={Products of Toeplitz operators on the Bergman spaces $A^2_\alpha$},
   journal={Algebra i Analiz},
   volume={18},
   date={2006},
   number={1},
   pages={144--161},
   issn={0234-0852},
   translation={
      journal={St. Petersburg Math. J.},
      volume={18},
      date={2007},
      number={1},
      pages={105--118},
      issn={1061-0022},
   }
}

\bib{R}{article}{
   author={Raimondo, R.},
   title={Toeplitz operators on the Bergman space of the unit ball},
   journal={Bull. Austral. Math. Soc.},
   volume={62},
   date={2000},
   number={2},
   pages={273--285}
}

\bib{Roe}{article}{
   author={Roe, J.},
   title={Hyperbolic groups have finite asymptotic dimension},
   journal={Proc. Amer. Math. Soc.},
   volume={133},
   date={2005},
   number={9},
   pages={2489--2490}
}

\bib{Sm}{article}{
   author={Smith, M},
   title={The reproducing kernel thesis for Toeplitz operators on the Paley-Wiener space},
   journal={Integral equations operator theory },
   volume={49},
   date={2004},
   number={1},
   pages={111--122}
}

\bib{St}{article}{
   author={Stroethoff, K.},
   title={Compact Toeplitz operators on Bergman spaces},
   journal={Math. Proc. Cambridge Philos. Soc.},
   volume={124},
   date={1998},
   number={1},
   pages={151--160}
}

\bib{SZ}{article}{
   author={Stroethoff, K.},
   author={Zheng, D.},
   title={Toeplitz and Hankel operators on Bergman spaces},
   journal={Trans. Amer. Math. Soc.},
   volume={329},
   date={1992},
   number={2},
   pages={773--794}
}

\bib{SZ2}{article}{
   author={Stroethoff, K.},
   author={Zheng, D.},
   title={Products of Hankel and Toeplitz operators on the Bergman space},
   journal={J. Funct. Anal.},
   volume={169},
   date={1999},
   number={1},
   pages={289--313}
}

\bib{Sua}{article}{
   author={Su{\'a}rez, D.},
   title={The essential norm of operators in the Toeplitz algebra on $A^p(\mathbb{B}_n)$},
   journal={Indiana Univ. Math. J.},
   volume={56},
   date={2007},
   number={5},
   pages={2185--2232}
}

\bib{Sua2}{article}{
   author={Su{\'a}rez, D.},
   title={Approximation and the $n$-Berezin transform of operators on the
   Bergman space},
   journal={J. Reine Angew. Math.},
   volume={581},
   date={2005},
   pages={175--192}
}

\bib{YS}{article}{
   author={Yu, T.},
   author={Sun, S. L.},
   title={Compact Toeplitz operators on the weighted Bergman spaces},
   language={Chinese, with English and Chinese summaries},
   journal={Acta Math. Sinica (Chin. Ser.)},
   volume={44},
   date={2001},
   number={2},
   pages={233--240}
}

\bib{ZhD}{article}{
   author={Zheng, D.},
   title={Toeplitz operators and Hankel operators},
   journal={Integral Equations Operator Theory},
   volume={12},
   date={1989},
   number={2},
   pages={280--299}
}

\bib{Zhu}{book}{
   author={Zhu, K.},
   title={Spaces of holomorphic functions in the unit ball},
   series={Graduate Texts in Mathematics},
   volume={226},
   publisher={Springer-Verlag},
   place={New York},
   date={2005},
   pages={x+271}
}

\bib{Zor}{article}{
   author={Zorboska, N.},
   title={Toeplitz operators with BMO symbols and the Berezin transform},
   journal={Int. J. Math. Math. Sci.},
   volume={46},
   date={2003},
   pages={2929--2945}
}

\end{biblist}
\end{bibdiv}

\end{document}